\crefname{hypothesis}{Hypothesis}{Hypotheses}
\crefname{fact}{Fact}{Facts}
\newcommand{\bu}{{\bf u}}
\newcommand{\dx}{{\rm d}x}
\newcommand{\dy}{{\rm d}y}
\newcommand{\dz}{{\rm d}z}
\newcommand{\dt}{{\rm d}t}
\newcommand{\dzero}{{\rm d}_0}
\newcommand{\done}{{\rm d}_1}
\newcommand{\dtwo}{{\rm d}_2}
\newcommand{\dthree}{{\rm d}_3}
\newcommand{\trinorm}[1]{{\vert\kern-0.25ex\vert\kern-0.25ex\vert #1 \vert\kern-0.25ex\vert\kern-0.25ex\vert}}
\title{A Unified Spatiotemporal Formulation with Physics-Preserving Structure for Time-Dependent Convection-Diffusion Problems\thanks{Submitted to the editors DATE.
\funding{The authors received no external funding for this work.}}}
\author{James H. Adler\thanks{Department of Mathematics, Tufts University, Medford, MA 02155  (\email{james.adler@tufts.edu}, \email{xiaozhe.hu@tufts.edu}, \email{seulip.lee@tufts.edu}).}
\and Xiaozhe Hu\footnotemark[2]
\and Seulip Lee\footnotemark[2]}
\begin{document}

\maketitle

\begin{abstract}
We propose a unified four-dimensional (4D) spatiotemporal formulation for time-dependent convection-diffusion problems that preserves underlying physical structures. By treating time as an additional space-like coordinate, the evolution problem is reformulated as a stationary convection-diffusion equation on a 4D space-time domain. Using exterior calculus, we extend this framework to the full family of convection-diffusion problems posed on $H(\textbf{grad})$, $H(\textbf{curl})$, and $H(\textnormal{div})$. The resulting formulation is based on a 4D Hodge-Laplacian operator with a spatiotemporal diffusion tensor and convection field, augmented by a small temporal perturbation to ensure nondegeneracy. This formulation naturally incorporates fundamental physical constraints, including divergence-free and curl-free conditions. We further introduce an exponentially-fitted 4D spatiotemporal flux operator that symmetrizes the convection-diffusion operator and enables a well-posed variational formulation. Finally, we prove that the temporally-perturbed formulation converges to the original time-dependent convection-diffusion model as the perturbation parameter tends to zero.

\end{abstract}

\begin{keywords}
convection-diffusion problems, exterior calculus, spatiotemporal formulation,\newline exponentially-fitted flux operator, physics-preserving structure, convection-dominated
\end{keywords}

\begin{MSCcodes}
35K10, 58A10, 35B30
\end{MSCcodes}

\section{Introduction}
    Time-dependent convection-diffusion problems describe the transport of physical quantities such as heat, mass, and momentum in diverse applications, from porous-media flow and thermal diffusion to atmospheric, oceanic, and plasma dynamics. In many applications, the transport is convection-dominated, meaning that the advective effects greatly exceed diffusive dissipation.
    In such settings, solutions often develop sharp interior or boundary layers aligned with the convection field, which must be carefully resolved. However, from a numerical perspective, resolving these features is challenging: standard discretizations may violate the discrete maximum principle \cite{burman2005stabilized,ciarlet1973maximum,roos2008robust}, resulting in spurious oscillations \cite{elman2014finite,john2007spurious}, while stabilized methods designed to suppress such oscillations (e.g., \cite{bacuta2025convergence,barrenechea2018unified,brooks1982streamline,knobloch2010generalization}) often introduce excessive numerical dissipation, leading to artificial smearing and loss of physical resolution. We refer to \cite{barrenechea2024finite,kikuchi1977discrete} for further discussion of discrete maximum principles and artificial diffusion. Therefore, balancing stability, accuracy, and the preservation of essential physical properties in convection-dominated transport remains a central challenge in numerical simulation.

    A classical difficulty in the numerical treatment of time-dependent convection-diffusion problems arises from the separate treatment of spatial and temporal discretizations, often via time-stepping or operator-splitting methods. These approaches intertwine stability conditions across time and space and can accumulate numerical dissipation as the solution evolves. An alternative \cite{bank2017arbitrary} is to regard time as an additional spatial coordinate and reformulate the problem on the $(n+1)$-dimensional space-time domain $\Omega = \Omega_{\mathbf{x}} \times \Omega_{t}$. In this spatiotemporal setting, the original evolution equation becomes a stationary convection-diffusion problem on a higher-dimensional domain, with the temporal dependence incorporated through a spatiotemporal diffusion tensor and convection field. Although this reformulation increases the dimensionality of the computational domain, it removes the need for time-stepping and places spatial and temporal variations on equal footing. This is particularly advantageous in convection-dominated regimes: stability and accuracy can be enforced at the global space-time level while still controlling numerical dissipation, rather than amplifying it through successive temporal updates. Moreover, the formulation can integrate naturally with adaptive refinement strategies and existing solver libraries for stationary problems, providing an efficient computational framework that accurately resolves sharp transport layers.

    In this work, we develop a unified spatiotemporal formulation with physics-preserving structure for time-dependent convection-diffusion problems using the language of differential forms and exterior calculus. Our goal is to extend the spatiotemporal approach in \cite{bank2017arbitrary} to the full family of time-dependent convection-diffusion problems. While the space-time formulation in \cite{bank2017arbitrary} provides a spatiotemporal representation for the $H(\textbf{grad})$ problem, a direct extension to the $H(\textbf{curl})$ and $H(\mathrm{div})$ settings is not available within classical vector calculus, due to the absence of natural 4D analogues of the curl operator and vector field structures. Although several works \cite{heumann2010eulerian,hiptmair2015discretizing,wu2020simplex} have employed differential forms to study $H(\textbf{curl})$ and $H(\textnormal{div})$ convection-diffusion problems, these approaches either treat space and time separately or remain restricted to 3D steady-state settings. To overcome this difficulty, we formulate all operators within a 4D exterior-calculus framework by introducing a spatiotemporal diffusion tensor and convection field, augmented by a small artificial temporal perturbation proposed in \cite{bank2017arbitrary} to ensure the nondegeneracy of the diffusion tensor. We further define unified boundary conditions suitable for the spatiotemporal setting that incorporate both Dirichlet and Neumann types. Remarkably, the unified 4D spatiotemporal governing equation naturally embeds physics-preserving structures, including Gauss's law for magnetism (divergence-free condition) and a curl-free condition.

    In addition, we introduce an exponentially-fitted spatiotemporal flux operator, inspired by semiconductor simulations \cite{brezzi1989numerical,brezzi1989two,miller1991triangular}, and demonstrate its applicability to our unified framework. This operator symmetrizes fluxes in convection-diffusion problems, transferring the structural properties of a Hodge Laplacian to the convection-diffusion setting and providing the key analytical foundation for developing future physics-preserving 4D discretizations in convection-dominated regimes (see, e.g., \cite{adler2023stable,lazarov2012exponential,wu2020simplex} for related 2D and 3D approaches). Building on the flux operator, we formulate the variational problem for the unified formulation and establish its well-posedness. Finally, we show that the temporally-perturbed spatiotemporal formulation converges to the original convection-diffusion model as the perturbation parameter tends to zero, ensuring consistency with the standard formulation.

    The remainder of the paper is organized as follows. In \Cref{sec: prelim}, we review the time-dependent convection-diffusion problems, reintroduce the spatiotemporal formulation for the $H(\textbf{grad})$ case as in \cite{bank2017arbitrary}, and define the spatiotemporal exterior calculus framework, which serves as the foundation for the unified equation. Building on this groundwork, \Cref{sec: unified_spatiotemporal} presents the unified spatiotemporal convection-diffusion equation, specifies its structure at each differential-form level, and discusses the associated boundary conditions. In \Cref{sec: well-posed}, we develop the exponentially-fitted flux operator in the spatiotemporal setting, formulate the corresponding variational problem, and establish well-posedness of the unified formulation. \Cref{sec: convergence} provides a convergence analysis of the temporally-perturbed formulation, showing that it approaches the original convection-diffusion model as the perturbation parameter tends to zero. Finally, \Cref{sec: conclusions} summarizes the main findings and outlines directions for future research.

\section{Preliminaries}\label{sec: prelim}
In this section, we introduce the time-dependent convection-diffusion problems in the $H(\textbf{grad})$, $H(\textbf{curl})$, $H(\textnormal{div})$, and purely temporal settings. We then review the spatiotemporal formulation for the $H(\textbf{grad})$ case as in \cite{bank2017arbitrary}, and use it as motivation to develop the spatiotemporal exterior calculus framework.

\subsection{Convection-diffusion problems}
In an open, bounded spatial domain $\Omega_{\mathbf{x}}\subset\mathbb{R}^3$ and over the time interval $\Omega_t=(t_0,T)$, we establish the classical time-dependent $H(\textbf{grad})$ convection-diffusion equation, which models heat conduction, mass transport, and contaminant dispersion in fluids (see e.g. \cite{donea2003finite,hughes2012finite,zheng2002applied}): \begin{subequations}\label{sys: conv_diff_grad}
\begin{alignat}{2}
\frac{\partial {u}}{\partial t} - \nabla\cdot(\alpha\nabla{u} + \bm{\beta}u)&=f &&  \quad\text{in }\Omega_\mathbf{x}\times\Omega_t,\label{eqn: grad_equation}\\
u &= 0 && \quad\text{on }\partial\Omega_\mathbf{x},\label{eqn: grad_boundary}\\
u(\mathbf{x},t_0)&=u_{t_0}(\mathbf{x})&&\quad\text{in } \Omega_\mathbf{x}.\label{eqn: grad_initial}
\end{alignat}
\end{subequations}
Here, $u$ denotes the transported quantity with initial condition $u_{t_0}(\mathbf{x})$, $\alpha=\alpha(\mathbf{x},t)>0$ is the diffusion coefficient, $\bm{\beta}=\bm{\beta}(\mathbf{x},t)$ is the convection (advection) field, and $f=f(\mathbf{x},t)$ represents a source or forcing term. The operator $\nabla=\langle\partial_x,\partial_y,\partial_z\rangle$ acts only on spatial variables, where $\mathbf{x} = \langle x,y,z\rangle$.

Similarly, problem \eqref{sys: conv_diff_curl} represents a time-dependent $H(\textbf{curl})$ convection-diffusion equation involving curl operators. Such formulations arise in models of electromagnetic induction~\cite{krause1976mean}, Maxwell's equations in conducting media~\cite{vujevic2024new}, and magnetohydrodynamics (MHD)~\cite{davidson2017introduction,galtier2016introduction}: \begin{subequations}\label{sys: conv_diff_curl}
\begin{alignat}{2}
\frac{\partial \mathbf{u}}{\partial t} + \nabla\times(\alpha\nabla\times \mathbf{u} + \bm{\beta}\times\mathbf{u})&=\mathbf{f} &&  \quad\text{in }\Omega_\mathbf{x}\times\Omega_t,\label{eqn: curl_equation}\\
\mathbf{n}\times\mathbf{u} &= \mathbf{0} && \quad\text{on }\partial\Omega_\mathbf{x},\label{eqn: curl_boundary}\\
\mathbf{u}(\mathbf{x},t_0)&=\mathbf{u}_{t_0}(\mathbf{x})&&\quad\text{in } \Omega_\mathbf{x},\label{eqn: curl_initial}
\end{alignat}
\end{subequations}
where $\mathbf{u}$ denotes the vector-valued field (e.g., magnetic or electric field) with initial condition $\mathbf{u}_{t_0}(\mathbf{x})$, $\mathbf{f}=\mathbf{f}(\mathbf{x},t)$ is a vector-valued source or forcing term, 
and $\mathbf{n}$ denotes the unit outward normal vector on $\partial\Omega_\mathbf{x}$.
Next, problem \eqref{sys: conv_diff_div} represents the time-dependent $H(\textnormal{div})$ convection-diffusion equation, which naturally arises from the 2-form convection-diffusion equation~\cite{heumann2010eulerian,wu2020simplex}:\begin{subequations}\label{sys: conv_diff_div}
\begin{alignat}{2}
\frac{\partial \mathbf{u}}{\partial t} - \nabla(\alpha\nabla\cdot\mathbf{u} + \bm{\beta}\cdot\mathbf{u})&=\mathbf{f} &&  \quad\text{in }\Omega_\mathbf{x}\times\Omega_t,\label{eqn: div_equation}\\
\mathbf{u}\cdot\mathbf{n} &= 0 && \quad\text{on }\partial\Omega_\mathbf{x},\label{eqn: div_boundary}\\
\mathbf{u}(\mathbf{x},t_0)&=\mathbf{u}_{t_0}(\mathbf{x})&&\quad\text{in } \Omega_\mathbf{x}.\label{eqn: div_initial}
\end{alignat}
\end{subequations}
Finally, problem \eqref{sys: conv_diff_temporal} corresponds to a purely temporal differential equation parameterized by the spatial coordinate,
\begin{subequations}\label{sys: conv_diff_temporal}
\begin{alignat}{2}
\frac{\partial u}{\partial t}&=f &&  \quad\text{in }\Omega_\mathbf{x}\times\Omega_t,\label{eqn: temporal_equation}\\
u(\mathbf{x},t_0)&=u_{t_0}(\mathbf{x})&&\quad\text{in } \Omega_\mathbf{x},\label{eqn: temporal_initial}
\end{alignat}
\end{subequations}
where each spatial point evolves independently in time under the source term $f(\mathbf{x},t)$.

Our objective is to unify the transient convection-diffusion problems 
\eqref{sys: conv_diff_grad}--\eqref{sys: conv_diff_temporal} within a single stationary mathematical framework. Although each problem is expressed in a distinct functional 
setting, $H(\textbf{grad})$, $H(\textbf{curl})$, $H(\textnormal{div})$, and $L^2$-temporal, they share common physical principles of transport and diffusion. 
This approach not only highlights their structural similarities but also facilitates the development of unified space-time analytical and numerical techniques.

\subsection{Spatiotemporal approach for \texorpdfstring{$H(\textbf{grad})$}{H(grad)}}\label{subsec: Spatiotemporal_Hgrad}

Following the space-time formulation presented in \cite{bank2017arbitrary}, we define the space-time domain as \(\Omega := \Omega_{\mathbf{x}} \times \Omega_t\), and introduce the combined variable \(\mathbf{y} = (\mathbf{x}, t)\).
The boundary of the domain is decomposed as $\partial\Omega=\Gamma_\mathbf{x}\cup\Gamma_{t_0}\cup\Gamma_T$, where
\begin{equation*}
    \Gamma_\mathbf{x} : = \partial\Omega_\mathbf{x}\times\Omega_t,\quad \Gamma_{t_0}: = \Omega_\mathbf{x}\times\{t=t_0\},\quad \Gamma_T := \Omega_\mathbf{x}\times\{t=T\}.
\end{equation*}
We also set $\Gamma_D:=\Gamma_\mathbf{x}\cup\Gamma_{t_0}$ to denote Dirichlet-type boundary conditions, where data are prescribed on the spatial boundary and at the initial time.

Under this setting, the original $H(\textbf{grad})$ time-dependent problem \eqref{sys: conv_diff_grad} can be reformulated as a stationary problem on the spatiotemporal domain \(\Omega \subset \mathbb{R}^{3+1}\). We seek a solution \(u(\mathbf{y}): \Omega \to \mathbb{R}\) such that
\begin{subequations}\label{sys: spatiotemporal-grad}
\begin{alignat}{2}
 -\nabla_{\mathbf{y}} \cdot (D \nabla_{\mathbf{y}} u + \mathbf{b} u) &= f &&\quad \text{in } \Omega,\label{eqn: spatiotemporal-grad-equation}\\
 u &= g &&\quad \text{on } \Gamma_D,\label{eqn: spatiotemporal-grad-Dirichlet}\\
  D\nabla_\mathbf{y} u \cdot \mathbf{n}_{T} &= 0 &&\quad \text{on } \Gamma_{T}.\label{eqn: spatiotemporal-grad-Neumann}
\end{alignat}
\end{subequations}
Here, \(\nabla_{\mathbf{y}}=\langle\partial_x,\partial_y,\partial_z,\partial_t\rangle\) denotes the gradient operator with respect to the spatiotemporal variable \(\mathbf{y} = (\mathbf{x}, t)\). The boundary data \(g\) is determined by the conditions \eqref{eqn: grad_boundary} and \eqref{eqn: grad_initial}, and \(\mathbf{n}_T=\langle\mathbf{0},1\rangle\in \mathbb{R}^{3+1}\) is the outward unit normal vector on \(\Gamma_{T}\).

The spatiotemporal diffusion tensor and convection vector are given by
\[
D = \begin{bmatrix}
\alpha I_3 & \mathbf{0} \\
\mathbf{0}^T & 0
\end{bmatrix}, \quad
\mathbf{b} = \begin{bmatrix}
\bm{\beta} \\
-1
\end{bmatrix},
\]
where $I_3$ denotes the $3\times 3$ identity matrix.
Since \(D\) is degenerate in the temporal direction, we introduce a small artificial second-order time derivative term \(-\varepsilon\, \partial^2 u / \partial t^2\) into the $H(
\textbf{grad}
)$ equation \eqref{eqn: grad_equation}, modifying \(D\) to a nonsingular form (following \cite{bank2017arbitrary}):
\[
D = \begin{bmatrix}
\alpha I_3 & \mathbf{0} \\
\mathbf{0}^T & \varepsilon
\end{bmatrix}.
\]
This modification leads to a convection-dominated regime in the spatiotemporal problem \eqref{sys: spatiotemporal-grad}, characterized by \(\varepsilon\ll1\). 
Moreover, depending on the magnitudes of \(\alpha\) and \(\varepsilon\), the spatiotemporal problem incorporates an anisotropic diffusion tensor.

The Neumann-type boundary condition \eqref{eqn: spatiotemporal-grad-Neumann} evaluates to
\begin{equation}\label{eqn: grad_neumann}
    D\nabla_\mathbf{y} u \cdot \mathbf{n}_{T} = \varepsilon u_t(\mathbf{x},T) = 0,
\end{equation}
which enforces vanishing terminal velocity in time, induced by the artificial temporal perturbation.
Additionally, the flux can be defined as
\begin{equation}
    J(u) := D \nabla_{\mathbf{y}} u + \mathbf{b} u ,
    \label{eqn: flux}
\end{equation}
so the main equation \eqref{eqn: spatiotemporal-grad-equation} can be written compactly as
\begin{equation*}
    -\nabla_{\mathbf{y}} \cdot J(u) = f.
\end{equation*}

The spatiotemporal reformulation offers several advantages, particularly from a numerical standpoint. It enables the use of well-established tools and techniques for stationary problems, including adaptive methods and high-performance linear solvers.
Most notably, when $\alpha$ is very small, standard separate space-time discretizations often fail to capture shocks \cite{john2008finite,kuzmin2004high}: they may introduce spurious oscillations due to the lack of a discrete maximum principle, or they may oversmooth the solution due to excessive numerical dissipation.
This oversmoothing typically becomes more severe over time, as each time-stepping iteration introduces accumulated numerical dissipation.
In contrast, stable numerical methods for convection-dominated regimes based on the stationary formulation have proven effective in resolving boundary layers while preserving the discrete maximum principle \cite{cao2025edge,xu1999monotone}.
This property is particularly valuable for time-dependent transport simulations (convection-dominated), where accurate shock resolution with minimal numerical dissipation is essential.

However, extending the spatiotemporal approach to the $H(\textbf{curl})$ and $H(\text{div})$ convection-diffusion problems presents significant challenges. For the $H(\textbf{curl})$ problem \eqref{sys: conv_diff_curl}, the spatiotemporal formulation requires a 4D curl, which is not standard in classical vector calculus. For the $H(\text{div})$ problem \eqref{sys: conv_diff_div}, a straightforward extension of the spatial vector field to a 4D field of the form $\langle \bu,0\rangle$ fails to reproduce the equation \eqref{eqn: div_equation}, since 
$$\nabla_\mathbf{y}(\nabla_\mathbf{y}\cdot \langle \mathbf{u},0\rangle + \mathbf{b}\cdot\langle\mathbf{u},0\rangle) = \nabla_\mathbf{y}(\nabla\cdot\bu+\bm{\beta}\cdot\bu)=
\begin{bmatrix}
    \nabla(\nabla\cdot\bu+\bm{\beta}\cdot\bu)\\
    (\nabla\cdot\bu+\bm{\beta}\cdot\bu)_t
\end{bmatrix}.$$

To develop a more general and flexible mathematical framework for convection-diffusion problems,  we adopt the language of differential forms and exterior calculus, which naturally accommodates 4D formulations interpreted as three spatial dimensions plus one temporal dimension.
This enables a unified treatment of the time-dependent $H(\textbf{grad})$, $H(\textbf{curl})$, and $H(\text{div})$ problems within a single spatiotemporal equation. A further distinctive feature of this 4D formulation is that it also incorporates the purely temporal problem \eqref{sys: conv_diff_temporal}.

\subsection{Spatiotemporal exterior calculus}\label{subsec: form&hodge}

We introduce differential forms and Hodge star operators suitable for the convection-diffusion problems \eqref{sys: conv_diff_grad}--\eqref{sys: conv_diff_temporal} in a 4D space-time setting (i.e., 3D space + 1D time), employing the exterior calculus framework (e.g., \cite{tu2011manifolds}).
Let $\Lambda^k$ denote the space of differential $k$-forms on the space-time domain $\Omega$. 
Each element of $\Lambda^k$ can be expressed locally as a linear combination of the basis forms $\bm{\omega}_k$ using the standard wedge product, denoted by $\wedge$.  These combinations are listed in \Cref{tab: Hodge_star}.

\vskip 3pt
\textit{\textbf{Hodge star operators.}} We next recall the Hodge star operator, 
which establishes a correspondence between differential $k$-forms and $(4-k)$-forms in space-time and provides the metric structure necessary for defining inner products and adjoint operators. 
In the 4D setting, 
we work under a Euclidean metric with signature $(+,+,+,+)$ 
using coordinates $(x,y,z,t)$. The standard Hodge star operator \(*\) and its scaled variant \(*_\alpha\) are summarized in \Cref{tab: Hodge_star}.
\begin{table}[htbp]
\footnotesize
\caption{Basis $k$-forms and corresponding standard and scaled Hodge star operators in 4D.}\label{tab: Hodge_star}
\begin{center}
  \begin{tabular}{|c||r|r|r|} \hline
   $k$ & $\bm{\omega}_k$ & $*\,\bm{\omega}_k$ & $*_\alpha\bm{\omega}_k$  \\ 
   \hline
   \hline
   0 & $1$ & $\dx\wedge\dy\wedge\dz\wedge\dt$ & $\alpha\,\dx\wedge\dy\wedge\dz\wedge\dt$ \\
   \hline
    $1$ & $\dx$ & $\dy\wedge\dz\wedge\dt$ & $\alpha\,\dy\wedge\dz\wedge\dt$ \\
     & $\dy$ & $\dz\wedge\dx\wedge\dt$ & $\alpha\,\dz\wedge\dx\wedge\dt$ \\
     & $\dz$ & $\dx\wedge\dy\wedge\dt$ & $\alpha\,\dx\wedge\dy\wedge\dt$ \\
      & $\dt$ & $-\dx\wedge\dy\wedge\dz$ & $-\varepsilon\,\dx\wedge\dy\wedge\dz$ \\
      \hline

       2 & $\dy\wedge\dz$ & $\dx\wedge\dt$ & $\alpha\,\dx\wedge\dt$ \\
       & $\dz\wedge\dx$ & $\dy\wedge\dt$ & $\alpha\,\dy\wedge\dt$ \\
         & $\dx\wedge\dy$ & $\dz\wedge\dt$ & $\alpha\,\dz\wedge\dt$ \\
       & $\dx\wedge\dt$ & $\dy\wedge\dz$ & $\varepsilon\,\dy\wedge\dz$ \\
       & $\dy\wedge\dt$ & $\dz\wedge\dx$ & $\varepsilon\,\dz\wedge\dx$ \\
       & $\dz\wedge\dt$ & $\dx\wedge\dy$ & $\varepsilon\,\dx\wedge\dy$ \\
      \hline
      3 & $\dx\wedge\dy\wedge\dz$ & $\dt$ & $\alpha\,\dt$ \\
       & $\dy\wedge\dz\wedge\dt$ & $-\dx$ & $-\varepsilon\,\dx$ \\
       & $\dz\wedge\dx\wedge\dt$ & $-\dy$ & $-\varepsilon\,\dy$ \\
       & $\dx\wedge\dy\wedge\dt$ & $-\dz$ & $-\varepsilon\,\dz$ \\
      \hline
      4 & $\dx\wedge\dy\wedge\dz\wedge\dt$ & $1$ & $\varepsilon$ \\
    \hline
  \end{tabular}
\end{center}
\end{table}
\begin{remark}\label{remark: Hodge_star}
    In the scaled Hodge star operator \(*_\alpha\), a consistent scaling rule is applied as follows: if the basis form $\bm{\omega}_k$ does not involve the temporal component \(\dt\), the wedge product in $*_\alpha\bm{\omega}_k$ is multiplied by the spatial diffusion coefficient \(\alpha\); if it includes \(\dt\), it is instead scaled by the temporal coefficient \(\varepsilon\).
This directional scaling reflects the anisotropy of space-time diffusion, in which the spatial and temporal dimensions contribute differently.
Additionally, the action of the scaled double Hodge star reduces to a scalar
multiplication:
 \[(-1)^{k(4-k)}**_\alpha\,\bm{\omega}_k=\begin{cases}
        \alpha \bm{\omega}_k,&\text{if $\bm{\omega}_k$ does not involve $\mathrm{d}t$},\\[0.5ex]
        \varepsilon \bm{\omega}_k,&\text{if $\bm{\omega}_k$ includes $\mathrm{d}t$}.
    \end{cases}\]
\end{remark}

\textit{\textbf{Spaces of differential forms.}} We denote by $L^2\Lambda^k(\Omega)$ the space of square-integrable differential $k$-forms,
whose coefficients in local coordinates belong to $L^2(\Omega)$.
The corresponding Sobolev space of square-integrable $k$-forms 
whose exterior derivatives are also square-integrable is defined as
\[
H\Lambda^k(\Omega)
:= \{\mathbf{v}_k\in L^2\Lambda^k(\Omega)\;:\;\mathrm{d}_k\mathbf{v}_k\in L^2\Lambda^{k+1}(\Omega)\},
\]
where $\mathrm{d}_k:\Lambda^k\to\Lambda^{k+1}$ denotes the exterior derivative.
Similarly, we introduce the adjoint space
\[
H^*\Lambda^k(\Omega)
:= \{\mathbf{v}_k\in L^2\Lambda^k(\Omega)\;:\;\delta^{k-1}\mathbf{v}_k\in L^2\Lambda^{k-1}(\Omega)\},
\]
where $\delta^{k-1}: \Lambda^k\to\Lambda^{k-1}$ denotes the codifferential operator, defined as the $L^2$-adjoint of $\mathrm{d}_{k-1}$ with respect to the inner product 
induced by the Hodge star operator, $\delta^{k-1}= (-1)*\left(\mathrm{d}_{4-k}\right)*$.
The weighted codifferentials are then defined using the scaled Hodge star,
\begin{equation}
    \delta^{k-1}_{1\alpha} := (-1) * \left( \mathrm{d}_{4 - k} \right) *_\alpha \quad \text{and} \quad
\delta^{k-1}_{\alpha1} := (-1) *_\alpha \left( \mathrm{d}_{4-k} \right) *.\label{eqn: defi_delta}
\end{equation}

\vskip 3pt
\textit{\textbf{Representation of the solution and source forms.}}
We next define how the physical unknowns and source terms from problems \eqref{sys: conv_diff_grad}--\eqref{sys: conv_diff_temporal} 
are represented as differential forms in the 4D setting.
A differential $k$-form solution $\mathbf{u}_k$ and source $\mathbf{f}_k$ on the domain $\Omega$ with coordinates $(x,y,z,t)$ are defined as follows:
\vskip 2pt
\begin{itemize}
    \item $k=0$ (0-form):  Here, $\mathbf{u}_0 := u(x,y,z,t)$ and $\mathbf{f}_0:=f(x,y,z,t)$, represent scalar fields.
    \vskip 2pt
    \item $k=1$ (1-form): Let $\bu = \langle u_1(x,y,z,t), u_2(x,y,z,t), u_3(x,y,z,t)\rangle$ and $\mathbf{f} = \langle f_1(x,y,z,t), f_2(x,y,z,t), f_3(x,y,z,t)\rangle$ denote the spatial solution and source vector fields, respectively. The corresponding 1-form representations are
    \begin{align*}
        \mathbf{u}_1 &:= u_1\,\dx + u_2\,\dy + u_3\,\dz + 0\cdot\dt,\\
        \mathbf{f}_1 &:= f_1\,\dx + f_2\,\dy + f_3\,\dz + 0\cdot\dt.
    \end{align*}
    \vskip 2pt
    \item $k=2$ (2-form): With the same spatial vector fields $\mathbf{u}$ and $\mathbf{f}$, the corresponding 2-form representations are
    \begin{align*}
        \mathbf{u}_2 &:= u_1\,\dy\wedge\dz+u_2\,\dz\wedge\dx+u_3\,\dx\wedge\dy\\
        &\qquad\qquad+0\cdot\dx\wedge\dt+0\cdot\dy\wedge\dt+0\cdot\dz\wedge\dt,\\
        \mathbf{f}_2 &:= f_1\,\dy\wedge\dz+f_2\,\dz\wedge\dx+f_3\,\dx\wedge\dy\\
        &\qquad\qquad+0\cdot\dx\wedge\dt+0\cdot\dy\wedge\dt+0\cdot\dz\wedge\dt.
    \end{align*}
    \item $k=3$ (3-form): For scalar functions $u$ and $f$, the 3-form representations are
    \begin{align*}
        \mathbf{u}_3 &:=u\,\dx\wedge\dy\wedge\dz + 0\cdot\dy\wedge\dz\wedge\dt + 0\cdot\dz\wedge\dx\wedge\dt + 0\cdot\dx\wedge\dy\wedge\dt,\\
        \mathbf{f}_3 &:=f\,\dx\wedge\dy\wedge\dz + 0\cdot\dy\wedge\dz\wedge\dt + 0\cdot\dz\wedge\dx\wedge\dt + 0\cdot\dx\wedge\dy\wedge\dt.
    \end{align*}
    \item $k=4$ (4-form): Here, $\mathbf{u}_4:=0\cdot \dx\wedge\dy\wedge\dz\wedge\dt$ and $\mathbf{f}_4:=0\cdot \dx\wedge\dy\wedge\dz\wedge\dt$, are the vanishing top-degree forms in this construction.
\end{itemize}
\begin{remark}\label{remark: rule_for_uk}
    A consistent rule governs the construction of these differential forms: for any basis wedge product that does not involve the time differential $\dt$, the corresponding coefficient is taken from the scalar fields $u$ and $f$, or from the components 
of the spatial vector fields $\mathbf{u} = \langle u_1, u_2, u_3 \rangle$ and $\mathbf{f} = \langle f_1, f_2, f_3 \rangle$. In contrast, if the wedge product includes $\dt$, the associated coefficient is set to zero.
\end{remark}

\vskip 3pt
\textit{\textbf{Weighted Hodge Laplacian.}}
Finally, the weighted Hodge Laplacian is a second-order differential operator acting on differential forms. It extends the classical Laplacian to the framework of exterior calculus and is defined using the exterior derivative $\mathrm{d}_k$ together
 with the weighted codifferentials $\delta^k_{1\alpha}$ and $\delta^k_{\alpha1}$:
\begin{equation}
    \Delta_{\alpha,k} = \delta^k_{1\alpha}\mathrm{d}_k + \mathrm{d}_{k-1}\delta^{k-1}_{\alpha1}.\label{eqn: Hodge_laplacian}
\end{equation}
This Hodge Laplacian primarily represents diffusion. Our formulation of convection-diffusion problems begins by introducing appropriate modifications to this operator.

\section{A unified spatiotemporal formulation}\label{sec: unified_spatiotemporal}

 Next, using the differential-forms framework introduced above, we present the unified spatiotemporal formulation for the time-dependent convection-diffusion problems \eqref{sys: conv_diff_grad}--\eqref{sys: conv_diff_temporal}.
 To incorporate convection effects, we introduce the spatiotemporal convection 1-form by extending the spatial convection field $\bm{\beta} = \langle \beta_1, \beta_2, \beta_3 \rangle$ into the 4D setting, together with the spatial diffusion coefficient $\alpha$ and the artificial perturbation parameter $\varepsilon$:
\begin{equation}
\mathbf{b}_1 = \alpha^{-1}\beta_1\,\mathrm{d}x + \alpha^{-1}\beta_2\,\mathrm{d}y + \alpha^{-1}\beta_3\,\mathrm{d}z - \varepsilon^{-1}\,\mathrm{d}t.
    \label{eqn: spatiotemporal_convection}
\end{equation}
Then, we define the spatiotemporal convection-diffusion flux operator by
\begin{equation}
\mathrm{J}_k\mathbf{u}_k= \mathrm{d}_k\mathbf{u}_k + \mathbf{b}_1 \wedge \mathbf{u}_k ,\label{eqn: defi_flux_J}
\end{equation}
which maps a $k$-form to a $(k+1)$-form.
This operator naturally combines diffusion, represented by $\mathrm{d}_k$, and convection, represented by the wedge product with $\mathbf{b}_1$, into a single differential formulation.
The unified spatiotemporal formulation is obtained by modifying the Hodge Laplacian \eqref{eqn: Hodge_laplacian} with the convection-diffusion flux operator \eqref{eqn: defi_flux_J}. This reformulation yields a unified spatiotemporal equation with a distinctive structure that preserves \textit{key physical properties}.

For each $k=0,1,2,3,4$, the unified spatiotemporal equation is
\begin{equation}
    \left(\delta^{k}_{1\alpha}{\rm{J}}_k + {\rm{d}}_{k-1}\delta^{k-1}_{\alpha1}\right)\mathbf{u}_k=\mathbf{f}_k.\label{eqn: physic_preserving_equation}
\end{equation}
This formulation extends the Hodge-Laplacian structure \eqref{eqn: Hodge_laplacian}
by incorporating the convection contribution $\delta^k_{1\alpha}(\mathbf{b}_1\wedge\mathbf{u}_k)$.
Expanding \eqref{eqn: physic_preserving_equation}, we obtain
\begin{equation*}
    \delta^{k}_{1\alpha}\mathrm{d}_k\mathbf{u}_k + \delta^k_{1\alpha}(\mathbf{b}_1\wedge\mathbf{u}_k) + {\rm{d}}_{k-1}\delta^{k-1}_{\alpha1}\mathbf{u}_k=\mathbf{f}_k.
\end{equation*}
We examine each term explicitly for fixed $k$ in the following subsections.

\subsection*{0-form ($H(\textbf{grad})$) problem}

Let $\mathbf{u}_0 = u$ represent the scalar solution and let $\mathbf{f}_0 = f$ denote the source term. Then, the unified equation \eqref{eqn: physic_preserving_equation} becomes
    \begin{equation*}
        \underbrace{-\varepsilon u_{tt} - \nabla\cdot(\alpha\nabla u)}_{\delta^0_{1\alpha}\mathrm{d}_0\mathbf{u}_0} \ +\  \underbrace{u_t - \nabla\cdot(\bm{\beta}u)}_{\delta^0_{1\alpha}(\mathbf{b}_1\wedge\mathbf{u}_0)} \ + \underbrace{0}_{\mathrm{d}_{-1}\delta^{-1}_{\alpha1}\mathbf{u}_0}= \underbrace{f}_{\mathbf{f}_0},
    \end{equation*}
    which simplifies to the equation:
    \begin{equation}
        -\varepsilon u_{tt} + u_t - \nabla\cdot(\alpha\nabla u+\bm{\beta}u) = f,\label{eqn: physics_Hgrad_equation}
    \end{equation}
    where, again, $\varepsilon>0$ is a small artificial parameter, introduced to eliminate the degeneracy in the spatiotemporal diffusion tensor (see also \cite{bank2017arbitrary} and \Cref{subsec: Spatiotemporal_Hgrad}). This result shows that, with the 0-form solution and source, the unified equation \eqref{eqn: physic_preserving_equation} recovers the spatiotemporal $H(\textbf{grad})$ convection-diffusion equation \eqref{eqn: spatiotemporal-grad-equation}.

\subsection*{1-form ($H(\textbf{curl})$) problem}
Here, $\bu = \langle u_1, u_2, u_3 \rangle$ is the spatial vector solution, and $\mathbf{f} = \langle f_1,f_2,f_3\rangle$ is the source term. The associated 1-form solution is defined as
    \begin{equation*}
        \mathbf{u}_1 = u_1\,\dx + u_2\,\dy + u_3\,\dz + 0\cdot\dt,
    \end{equation*}
    and the corresponding 1-form source term is
    \begin{equation*}
        \mathbf{f}_1 = f_1\,\dx + f_2\,\dy + f_3\,\dz + 0\cdot\dt.
    \end{equation*}
\Cref{tab: physics_Hcurl_case} summarizes the contributions of each basis form in the unified equation \eqref{eqn: physic_preserving_equation}.

\vskip -6pt

\begin{table}[htbp]
\footnotesize
\caption{Component-wise representation of the unified equation for 1-forms.}\label{tab: physics_Hcurl_case}
\begin{center}
  \begin{tabular}{|c||c|c|c|c|c|} \hline
   Basis & $\mathbf{u}_1$ &$\delta^1_{1\alpha}\done\mathbf{u}_1$ & $\delta^1_{1\alpha}(\mathbf{b}_1\wedge\mathbf{u}_1)$ & $\dzero\delta^0_{\alpha1}\mathbf{u}_1$ & $\mathbf{f}_1$ \\ 
   \hline
   \hline
    $\dx$ & $u_1$ & $-\varepsilon \partial_{tt}u_1 + [\nabla\times(\alpha\nabla\times\bu)]_1$ & $\partial_{t}u_1 +[ \nabla\times(\bm{\beta}\times\bu)]_1$ & $-\partial_x(\varepsilon\nabla\cdot\bu)$ & $f_1$\\
    
    $\dy$ & $u_2$ & $-\varepsilon \partial_{tt}u_2 + [\nabla\times(\alpha\nabla\times\bu)]_2$ & $\partial_{t}u_2 +[ \nabla\times(\bm{\beta}\times\bu)]_2$ & $-\partial_y(\varepsilon\nabla\cdot\bu)$ & $f_2$\\
    
    $\dz$ & $u_3$ & $-\varepsilon \partial_{tt}u_3 + [\nabla\times(\alpha\nabla\times\bu)]_3$ & $\partial_{t}u_3 +[ \nabla\times(\bm{\beta}\times\bu)]_3$ & $-\partial_z(\varepsilon\nabla\cdot\bu)$ & $f_3$\\

    \hline
    
    $\dt$ & $0$ & $\varepsilon (\partial_x\partial_tu_1+\partial_y\partial_tu_2+\partial_z\partial_tu_3)$ & $-(\partial_xu_1+\partial_yu_2+\partial_zu_3)$ & $-\partial_t(\varepsilon\nabla\cdot\bu)$ & $0$\\
    \hline
  \end{tabular}
\end{center}
\end{table}

By collecting the spatial terms ($\dx$, $\dy$, $\dz$), the unified equation \eqref{eqn: physic_preserving_equation} yields the corresponding vector-valued form,
\begin{align*}
        \underbrace{-\varepsilon\bu_{tt} + \nabla\times(\alpha\nabla\times\bu)}_{\delta^1_{1\alpha}\done\mathbf{u}_1} \ +\  \underbrace{\bu_t + \nabla\times(\bm{\beta}\times\bu)}_{\delta^1_{1\alpha}(\mathbf{b}_1\wedge\mathbf{u}_1)} 
        \ +\ \underbrace{\left(-\nabla(\varepsilon\nabla\cdot\bu)\right)}_{\dzero\delta^0_{\alpha1}\mathbf{u}_1}\ = \underbrace{\mathbf{f}}_{\mathbf{f}_1}.
    \end{align*}
The $\dt$-component additionally gives
\begin{equation*}
        \underbrace{\varepsilon(\nabla\cdot\bu_t)}_{\delta_{1\alpha}^1\done\mathbf{u}_1} \ +\  \underbrace{(-\nabla\cdot\bu)}_{\delta_{1\alpha}^1(\mathbf{b}_1\wedge\mathbf{u}_1)} \ +\  \underbrace{(-(\varepsilon\nabla\cdot\bu)_t)}_{\dzero\delta_{\alpha1}^0\mathbf{u}_1} \ = \underbrace{0}_{\mathbf{f}_1}.
    \end{equation*}
Assuming sufficient regularity so that $(\nabla \cdot \bu_t) = (\nabla \cdot \bu)_t$, we obtain the following vector-valued system:
\begin{equation}
\begin{cases}
    -\varepsilon\bu_{tt} + \bu_t+ \nabla\times(\alpha\nabla\times\bu +\bm{\beta}\times \bu) - \nabla(\varepsilon\nabla\cdot\bu)=\mathbf{f}, \\
    \nabla\cdot\bu = 0.\label{eqn: physics_Hcurl_system}
\end{cases}
    \end{equation}
The second equation in the system \eqref{eqn: physics_Hcurl_system} enforces the divergence-free condition on the vector field $\bu$, consistent with Gauss’s law for magnetism.  When substituted into the first equation, it yields an $H(\textbf{curl})$ convection-diffusion equation with an artificial temporal perturbation, as described in the $0$-form case. Consequently, the unified spatiotemporal equation \eqref{eqn: physic_preserving_equation}, together with the 1-form solution and source, embodies a physics-preserving spatiotemporal formulation of the $H(\textbf{curl})$ problem.

\subsection*{2-form ($H(\textnormal{div})$) problem}
Again, let $\bu = \langle u_1, u_2, u_3 \rangle$ and $\mathbf{f} = \langle f_1, f_2, f_3 \rangle$. We define the associated 2-form solution and source term as
    \begin{align*}
        \mathbf{u}_2 &= u_1 \,{\rm{d}}y\wedge{\rm{d}}z+u_2 \,{\rm{d}}z\wedge{\rm{d}}x+u_3 \,{\rm{d}}x\wedge{\rm{d}}y\\
        &\qquad\qquad+0\cdot{\rm{d}}x\wedge{\rm{d}}t+0 \cdot{\rm{d}}y\wedge{\rm{d}}t+0 \cdot{\rm{d}}z\wedge{\rm{d}}t,\\
         \mathbf{f}_2 &= f_1 \,{\rm{d}}y\wedge{\rm{d}}z+f_2 \,{\rm{d}}z\wedge{\rm{d}}x+f_3 \,{\rm{d}}x\wedge{\rm{d}}y\\
        &\qquad\qquad+0\cdot{\rm{d}}x\wedge{\rm{d}}t+0 \cdot{\rm{d}}y\wedge{\rm{d}}t+0 \cdot{\rm{d}}z\wedge{\rm{d}}t,
    \end{align*}
and summarize the contributions of each basis form in the unified equation \eqref{eqn: physic_preserving_equation} in \Cref{tab: physics_Hdiv_case}.

\vskip -6pt

\begin{table}[htbp]
\footnotesize
\caption{Component-wise representation of the unified equation for 2-forms.}\label{tab: physics_Hdiv_case}
\begin{center}
  \begin{tabular}{|c||c|c|c|c|c|} \hline
   Basis & $\mathbf{u}_2$ &$\delta_{1\alpha}^2\dtwo\mathbf{u}_2$ & $\delta_{1\alpha}^2(\mathbf{b}_1\wedge\mathbf{u}_2)$ & $\done\delta_{\alpha1}^1\mathbf{u}_2$ & $\mathbf{f}_2$ \\ 
   \hline
   \hline
    $\dy\wedge\dz$ & $u_1$ & $-\varepsilon \partial_{tt}u_1 - \partial_x(\alpha\nabla\cdot\bu)$ & $\partial_{t}u_1 - \partial_x(\bm{\beta}\cdot\bu)$ & $\partial_y(\varepsilon\nabla\times\bu)_3-\partial_z(\varepsilon\nabla\times\bu)_2$ & $f_1$\\
    
    $\dz\wedge\dx$ & $u_2$ & $-\varepsilon \partial_{tt}u_2 - \partial_y(\alpha\nabla\cdot\bu)$ & $\partial_{t}u_2 - \partial_y(\bm{\beta}\cdot\bu)$ & $\partial_z(\varepsilon\nabla\times\bu)_1-\partial_x(\varepsilon\nabla\times\bu)_3$ & $f_2$\\
    
    $\dx\wedge\dy$ & $u_3$ & $-\varepsilon \partial_{tt}u_3 - \partial_z(\alpha\nabla\cdot\bu)$ & $\partial_{t}u_3 - \partial_z(\bm{\beta}\cdot\bu)$ & $\partial_x(\varepsilon\nabla\times\bu)_2-\partial_y(\varepsilon\nabla\times\bu)_1$ & $f_3$\\

    \hline
    
    $\dx\wedge\dt$ & $0$ & $\varepsilon (\partial_y\partial_t u_3 - \partial_z\partial_t u_2)$ & $-(\partial_yu_3 - \partial_zu_2)$ & $-\partial_t(\varepsilon\nabla\times\bu)_1$ & $0$\\

    $\dy\wedge\dt$ & $0$ &$\varepsilon (\partial_z\partial_t u_1 - \partial_x\partial_t u_3)$ & $-(\partial_zu_1-\partial_xu_3)$ & $-\partial_t(\varepsilon\nabla\times\bu)_2$ & $0$\\

    $\dz\wedge\dt$ &  $0$ &$\varepsilon (\partial_x\partial_t u_2 - \partial_y\partial_t u_1)$ & $-(\partial_xu_2 - \partial_yu_1)$ & $-\partial_t(\varepsilon\nabla\times\bu)_3$ & $0$\\
    \hline
  \end{tabular}
\end{center}
\end{table}

From the spatial components $\dy \wedge \dz$, $\dz \wedge \dx$, and $\dx \wedge \dy$, the unified equation \eqref{eqn: physic_preserving_equation} becomes
\begin{equation*}
        \underbrace{-\varepsilon\bu_{tt}-\nabla(\alpha\nabla\cdot\bu)}_{\delta_{1\alpha}^2\dtwo\mathbf{u}_2} \ +\  \underbrace{\bu_t - \nabla(\bm{\beta}\cdot\bu)}_{\delta_{1\alpha}^2(\mathbf{b}_1\wedge\mathbf{u}_2)} \ +\  \underbrace{\nabla\times(\varepsilon\nabla\times\bu)}_{\done\delta_{\alpha1}^1\mathbf{u}_2} \ = \underbrace{\mathbf{f}}_{\mathbf{f}_2}.
    \end{equation*}
From the temporal-spatial components $\dx \wedge \dt$, $\dy \wedge \dt$, and $\dz \wedge \dt$, we have
\begin{equation*}
        \underbrace{\varepsilon\nabla\times \bu_t}_{\delta_{1\alpha}^2\dtwo\mathbf{u}_2} \ +\  \underbrace{(-\nabla\times\bu)}_{\delta_{1\alpha}^2(\mathbf{b}_1\wedge\mathbf{u}_2)} \ +\  \underbrace{(-(\varepsilon\nabla\times\bu)_t)}_{\done\delta_{\alpha1}^1\mathbf{u}_2}\ =\underbrace{\mathbf{0}}_{\mathbf{f}_2}.
    \end{equation*}
Again, assuming sufficient regularity to commute time and space derivatives, the resulting vector-valued system reads
\begin{equation}
\begin{cases}
    -\varepsilon\bu_{tt}+\bu_t-\nabla(\alpha\nabla\cdot\bu+\bm{\beta}\cdot\bu) + \nabla\times(\varepsilon\nabla\times\bu)=\mathbf{f}, \\
    \nabla\times\bu = \mathbf{0}.\label{eqn: physics_Hdiv_system}
\end{cases}
    \end{equation}
    Similar to the $1$-form case, the second equation in the system \eqref{eqn: physics_Hdiv_system} enforces a curl-free condition on the vector field $\bu$, which naturally arises in $H(\textnormal{div})$ elliptic problems. Substituting this constraint into the first equation yields an $H(\textnormal{div})$ convection-diffusion equation with an artificial temporal perturbation.

\subsection*{3-form ($L^2$-temporal) problem}
Let $u = u(x, y, z, t)$ be a scalar solution and $f = f(x, y, z, t)$ be the source term. The associated 3-form and source term are 
    \begin{align*}
        \mathbf{u}_3 &= u\,\dx\wedge\dy\wedge\dz + 0\cdot\dy\wedge\dz\wedge\dt + 0\cdot\dz\wedge\dx\wedge\dt + 0\cdot\dx\wedge\dy\wedge\dt,\\
        \mathbf{f}_3 &= f\,\dx\wedge\dy\wedge\dz + 0\cdot\dy\wedge\dz\wedge\dt + 0\cdot\dz\wedge\dx\wedge\dt + 0\cdot\dx\wedge\dy\wedge\dt.
    \end{align*}  
\Cref{tab: physics_L2_case} summarizes the contributions of each basis form in the unified equation \eqref{eqn: physic_preserving_equation}.

\vskip -6pt

\begin{table}[htbp]
\footnotesize
\caption{Component-wise representation of the unified equation for 3-forms.}\label{tab: physics_L2_case}
\begin{center}
  \begin{tabular}{|c||c|c|c|c|c|} \hline
   Basis & $\mathbf{u}_3$ &$\delta_{1\alpha}^3\dthree\mathbf{u}_3$ & $\delta_{1\alpha}^3(\mathbf{b}_1\wedge\mathbf{u}_3)$ & $\dtwo\delta_{\alpha1}^2\mathbf{u}_3$ & $\mathbf{f}_3$ \\ 
   \hline
   \hline
    $\dx\wedge\dy\wedge\dz$ & $u$ & $-\varepsilon \partial_{tt}u$ & $\partial_{t}u$ & $-\varepsilon(\partial_{xx}u+\partial_{yy}u+\partial_{zz}u)$ & $f$\\

    \hline
    $\dy\wedge\dz\wedge\dt$ & $0$ & $\varepsilon\partial_x\partial_t u$ & $-\partial_xu$ & $-\partial_t(\varepsilon\partial_xu)$ & $0$\\
    
    $\dz\wedge\dx\wedge\dt$ & $0$ & $\varepsilon\partial_y\partial_t u$ & $-\partial_yu$ & $-\partial_t(\varepsilon\partial_yu)$ & $0$\\
    
    $\dx\wedge\dy\wedge\dt$ & $0$ & $\varepsilon\partial_z\partial_t u$ & $-\partial_zu$ & $-\partial_t(\varepsilon\partial_zu)$ & $0$\\
    \hline
  \end{tabular}
\end{center}
\end{table}

From the spatial component $\dx \wedge \dy \wedge \dz$, the unified equation \eqref{eqn: physic_preserving_equation} becomes
    \begin{equation*}
        \underbrace{-\varepsilon u_{tt}}_{\delta_{1\alpha}^3\dthree\mathbf{u}_3} \ + \underbrace{u_t}_{\delta_{1\alpha}^3(\mathbf{b}_1\wedge\mathbf{u}_3)}  +\ \underbrace{(-\nabla\cdot(\varepsilon \nabla u))}_{\dtwo\delta_{\alpha1}^2\mathbf{u}_3} \ = \underbrace{f}_{\mathbf{f}_3}.
    \end{equation*}
The remaining temporal-spatial components ($\dy \wedge \dz \wedge \dt$, etc.) yield
\begin{equation*}
        \underbrace{\nabla(\varepsilon u_t)}_{\delta_{1\alpha}^3\dthree\mathbf{u}_3}\ +\ \underbrace{(-\nabla u)}_{\delta_{1\alpha}^3(\mathbf{b}_1\wedge\mathbf{u}_3)}\ +\ \underbrace{(-(\varepsilon\nabla u)_t)}_{\dtwo\delta_{\alpha1}^2\mathbf{u}_3} \ = \underbrace{\mathbf{0}}_{\mathbf{f}_3}.
    \end{equation*}
    Once again, assuming sufficient temporal-spatial regularity, the resulting scalar-valued system reads
\begin{equation}
\begin{cases}
    -\varepsilon u_{tt} + u_t - \nabla\cdot(\varepsilon\nabla u) = f, \\
    \nabla u = \mathbf{0}.\label{eqn: physics_L2_system}
\end{cases}
    \end{equation}
The second equation in the system \eqref{eqn: physics_L2_system} enforces a gradient-free condition on the scalar solution $u$, meaning that $u$ must be spatially constant. Substituting this constraint into the first equation yields an artificially perturbed first-order evolution equation. Interestingly, the unified spatiotemporal equation \eqref{eqn: physic_preserving_equation} 
also admits a spatiotemporal formulation of the $L^2$-temporal problem (\eqref{sys: conv_diff_temporal} in the case $\varepsilon = 0$), which is a distinctive feature of the 3-form setting in four dimensions.

\subsection*{4-form case}
Following the consistent construction rule described in \Cref{remark: rule_for_uk}, the associated 4-form solution and source term are defined as
\begin{align*}
    \mathbf{u}_4 &= 0\cdot\dx\wedge\dy\wedge\dz\wedge\dt,\\
    \mathbf{f}_4 &= 0\cdot\dx\wedge\dy\wedge\dz\wedge\dt.
\end{align*}
The unified equation \eqref{eqn: physic_preserving_equation} in this case becomes
    \begin{equation*}
        \underbrace{0}_{\delta_{1\alpha}^4\mathrm{d}_4\mathbf{u}_4} \ +  \underbrace{0}_{\delta_{1\alpha}^4(\mathbf{b}_1\wedge\mathbf{u}_4)}  + \ \underbrace{(-\partial_x(\alpha\cdot0)-\partial_y(\alpha\cdot0)-\partial_z(\alpha\cdot0)-\partial_t(\varepsilon\cdot0))}_{\mathrm{d}_{3}\delta_{\alpha1}^3\mathbf{u}_4}\ = \underbrace{0}_{\mathbf{f}_4}.
    \end{equation*}
The first two terms vanish trivially since both the exterior derivative of $\mathbf{u}_4$ and its wedge product with $\mathbf{b}_1$ are zero. The third term, $\mathrm{d}_{3}\delta_{\alpha1}^3\mathbf{u}_4$ (the exact component of the Hodge Laplacian), also vanishes due to the zero-valued definition of $\mathbf{u}_4$. Finally, the right-hand side is zero by construction, as $\mathbf{f}_4$ is identically zero. Hence, the unified equation \eqref{eqn: physic_preserving_equation} holds in the 4-form case.

\begin{remark}
    In summary, the spatiotemporal formulation of all time-dependent convection-diffusion problems \eqref{sys: conv_diff_grad}--\eqref{sys: conv_diff_temporal} can be expressed through the single unified equation \eqref{eqn: physic_preserving_equation} within the framework of spatiotemporal exterior calculus. This unified formulation inherently preserves the underlying physics according to the differential-form setting. Moreover, while retaining the advantages of stationary problems, this structure enables unified analysis, supports the development of unified numerical methods, and ensures that physical constraints are preserved within the equation itself, without requiring additional conditions.
\end{remark}

\subsection{Boundary conditions}\label{subsec: boundary_conditions}

We now introduce spatiotemporal boundary conditions in exterior calculus notation. These conditions are constructed to be consistent with the original boundary and initial conditions in \eqref{sys: conv_diff_grad}--\eqref{sys: conv_diff_temporal}. 
Let $\mathbf{n}=\langle n_1,n_2,n_3\rangle$ denote the unit outward normal vector on the spatial boundary $\Gamma_\mathbf{x} = \partial\Omega_\mathbf{x}\times\Omega_t$. We define the associated spatiotemporal normal 1-forms:
\begin{align*}
    &\mathbf{n}_{\mathbf{x}} := n_1\,\dx + n_2\,\dy + n_3\,\dz + 0\cdot\dt\quad\text{on }\Gamma_\mathbf{x},\\
    &\mathbf{n}_{t_0} := 0\cdot\dx + 0\cdot\dy + 0\cdot\dz + (-1)\cdot\dt\quad\text{on }\Gamma_{t_0},\\
    &\mathbf{n}_{T} := 0\cdot\dx + 0\cdot\dy + 0\cdot\dz + 1\cdot\dt\quad\text{on }\Gamma_{T},
\end{align*}
where $\Gamma_{t_0} = \Omega_\mathbf{x}\times\{t=t_{0}\}$ and $\Gamma_{T} = \Omega_\mathbf{x}\times\{t=T\}$.
We also define the interior product with respect to $\mathbf{n}_{T}$ by
\begin{align*}
\begin{cases}
    \iota_{\mathbf{n}_{T}}(w\,\mathrm{d}x^{i_1}\wedge\mathrm{d}x^{i_2}\wedge\cdots\wedge\mathrm{d}x^{i_k}\wedge\mathrm{d}t) = w\,\mathrm{d}x^{i_1}\wedge\mathrm{d}x^{i_2}\wedge\cdots\wedge\mathrm{d}x^{i_k},\\[0.5ex]
    \iota_{\mathbf{n}_{T}}(\mathrm{d}x^{i_1}\wedge\mathrm{d}x^{i_2}\wedge\cdots\wedge\mathrm{d}x^{i_k}) = 0,
\end{cases}
\end{align*}
where $w$ is a function, and $x^{i_j}$ are spatial coordinates (see \cite{tu2011manifolds} for more details).
Using these operators, the spatiotemporal boundary conditions for the unified physics-preserving equation \eqref{eqn: physic_preserving_equation} are defined on $\partial\Omega = \Gamma_{\mathbf{x}}\cup\Gamma_{t_0}\cup\Gamma_{T}$ as follows:
\begin{align}
    \text{Spatial boundary condition ($\mathbf{x}$-BC):}&\quad\mathbf{n}_{\mathbf{x}}\wedge \mathbf{u}_k = 0\quad \text{on } \Gamma_{\mathbf{x}},\label{eqn: xbc}\\
    \text{Temporal boundary condition ($t_0$-BC):}&\quad\mathbf{n}_{t_0}\wedge \mathbf{u}_k = \mathbf{n}_{t_0}\wedge \mathbf{u}_{k,t_0}\quad \text{on } \Gamma_{t_0},\label{eqn: tbc}\\
    \text{Artificial boundary condition ($\varepsilon$-BC):}&\quad \iota_{\mathbf{n}_{T}}(**_\alpha\mathrm{d}_k \mathbf{u}_k) = 0\quad \text{on } \Gamma_{T},\label{eqn: ebc}
\end{align}
where $\mathbf{u}_{k,t_0}$ denotes the initial $k$-form associated with the initial conditions in \eqref{sys: conv_diff_grad}--\eqref{sys: conv_diff_temporal}.  In the following, we examine these boundary conditions for fixed $k$. 

\subsubsection*{0-form problem}
Let $\mathbf{u}_0 = u(\mathbf{x},t)$ and $\mathbf{u}_{0,t_0} = u_{t_0}(\mathbf{x})$ as in \eqref{eqn: grad_initial}. The spatial boundary condition \eqref{eqn: xbc} is computed as
\begin{equation*}
    \mathbf{n}_\mathbf{x}\wedge\mathbf{u}_0 = n_1u\,\dx + n_2u\,\dy + n_3u\,\dz + 0\cdot\dt = 0,
\end{equation*}
which implies $u=0$ on $\Gamma_\mathbf{x}$.  
The temporal boundary condition \eqref{eqn: tbc} reduces to
\begin{equation*}
    \mathbf{n}_{t_0}\wedge\mathbf{u}_0 = - u\,\dt = -u_{t_0}\,\dt = \mathbf{n}_{t_0}\wedge\mathbf{u}_{0,t_0},
\end{equation*}
which yields $u(\mathbf{x},t_0) = u_{t_0}(\mathbf{x})$ on $\Gamma_{t_0}$.  
From \Cref{tab: Hodge_star} and \Cref{remark: Hodge_star},
\begin{equation*}
    **_\alpha\mathrm{d}_0 \mathbf{u}_0 = (-\alpha\partial_xu)\,\dx +(- \alpha\partial_yu)\,\dy +(- \alpha\partial_zu)\,\dz +(- \varepsilon\partial_tu)\,\dt,
\end{equation*}
and applying $\iota_{\mathbf{n}_{T}}$ gives
\begin{equation*}
    \iota_{\mathbf{n}_{T}}(**_\alpha\mathrm{d}_0 \mathbf{u}_0) = -\varepsilon \partial_t u=0.
\end{equation*}
In summary, the boundary conditions \eqref{eqn: xbc}--\eqref{eqn: ebc} for the 0-form case are
\begin{equation*}
\left\{\begin{array}{rl}
   \text{$\mathbf{x}$-BC:}& u=0\quad \text{on } \Gamma_{\mathbf{x}}, \\[0.5ex]
     \text{$t_0$-BC:}& u(\mathbf{x},t_0) = u_{t_0}(\mathbf{x})\quad \text{on } \Gamma_{t_0},\\[0.5ex]
    \text{$\varepsilon$-BC:}& \varepsilon u_t(\mathbf{x},T) = 0 \quad \text{on } \Gamma_{T}.
\end{array}\right.
\end{equation*}
Thus, the spatial ($\mathbf{x}$-BC) and temporal ($t_0$-BC) boundary conditions are consistent with those of the $H(\textbf{grad})$ convection-diffusion problem \eqref{eqn: grad_boundary}--\eqref{eqn: grad_initial}, while the artificial boundary condition ($\varepsilon$-BC) 
corresponds to a terminal velocity induced by the artificial temporal perturbation \eqref{eqn: grad_neumann}, and which is trivially satisfied when $\varepsilon=0$.

\subsubsection*{1-form problem}
Let $\mathbf{u}_1 = u_1\,\dx + u_2\,\dy + u_3\,\dz$ with initial 1-form $\mathbf{u}_{1,t_0}=\mathbf{u}_{t_0}(\mathbf{x})\cdot\mathrm{d}\mathbf{x}$ prescribed in \eqref{eqn: curl_initial}.
The spatial boundary condition \eqref{eqn: xbc} is
\begin{align*}
    \mathbf{n}_\mathbf{x}\wedge\mathbf{u}_1 &= (n_2 u_3 - n_3u_2)\,\dy\wedge\dz +(n_3 u_1 - n_1u_3)\,\dz\wedge\dx\\ 
    &\quad\quad+(n_1 u_2 - n_2u_1)\,\dx\wedge\dy+0\cdot\dx\wedge\dt + 0\cdot\dy\wedge\dt + 0\cdot\dz\wedge\dt = 0,
\end{align*}
which corresponds to the tangential condition $\mathbf{n}\times \mathbf{u} = \mathbf{0}$ on $\Gamma_\mathbf{x}$.
The temporal boundary condition \eqref{eqn: tbc} is
\begin{equation*}
    \mathbf{n}_{t_0}\wedge\mathbf{u}_1 =  u_1\,\dx\wedge\dt + u_2\,\dy\wedge\dt + u_3\,\dz\wedge\dt = \mathbf{n}_{t_0}\wedge\mathbf{u}_{1,t_0},
\end{equation*}
so that $\mathbf{u}(\mathbf{x},t_0) = \mathbf{u}_{t_0}(\mathbf{x})$ on $\Gamma_{t_0}$.
For the artificial boundary condition \eqref{eqn: ebc}, 
\begin{align*}
    **_\alpha\mathrm{d}_1 \mathbf{u}_1 &= \alpha(\partial_yu_3 - \partial_zu_2)\,\dy\wedge\dz + \alpha(\partial_zu_1 - \partial_xu_3)\,\dz\wedge\dx\\
    &\quad\quad+\alpha(\partial_xu_2 - \partial_yu_1)\,\dx\wedge\dy + (-\varepsilon\partial_t u_1)\,\dx\wedge\dt\\
    &\quad\quad+(-\varepsilon\partial_t u_2)\,\dy\wedge\dt + (-
    \varepsilon\partial_t u_3)\,\dz\wedge\dt,
\end{align*}
and applying $\iota_{\mathbf{n}_{T}}$ yields
\begin{equation*}
    \iota_{\mathbf{n}_{T}}(**_\alpha\mathrm{d}_1 \mathbf{u}_1) = (-\varepsilon \partial_t u_1)\,\dx + (-\varepsilon \partial_t u_2)\,\dy + (-\varepsilon \partial_t u_3)\,\dz=0.
\end{equation*}
In summary, the boundary conditions \eqref{eqn: xbc}--\eqref{eqn: ebc} in the 1-form case are
\begin{equation*}
\left\{\begin{array}{rl}
   \text{$\mathbf{x}$-BC:}  &  \mathbf{n}\times\bu=\mathbf{0}\quad \text{on } \Gamma_{\mathbf{x}}, \\[0.5ex]
     \text{$t_0$-BC:}& \mathbf{u}(\mathbf{x},t_0) = \mathbf{u}_{t_0}(\mathbf{x})\quad \text{on } \Gamma_{t_0},\\[0.5ex]
     \text{$\varepsilon$-BC:}& \varepsilon \bu_t(\mathbf{x},T) = \mathbf{0} \quad \text{on } \Gamma_{T}.
\end{array}\right.
\end{equation*}
As in the $H(\textbf{grad})$ case, the spatial ($\mathbf{x}$-BC) and temporal ($t_0$-BC) conditions correspond to the given data in the $H(\textbf{curl})$ problem \eqref{eqn: curl_boundary}--\eqref{eqn: curl_initial}, while the artificial condition ($\varepsilon$-BC) gives a terminal velocity associated with the temporal perturbation.

\subsubsection*{2-form problem}
Let $\mathbf{u}_2 = u_1\,\dy\wedge\dz + u_2\,\dz\wedge\dx + u_3\,\dx\wedge\dy$ with initial 2-form 
$\mathbf{u}_{2,t_0}$ defined analogously.  The spatial boundary condition \eqref{eqn: xbc} becomes
\begin{align*}
    \mathbf{n}_\mathbf{x}\wedge\mathbf{u}_2 = (n_1u_1+n_2u_2+n_3u_3)\,\dx\wedge\dy\wedge\dz=0,
\end{align*}
which corresponds to the normal condition $\mathbf{u}\cdot \mathbf{n} = 0$ on $\Gamma_\mathbf{x}$.  
The temporal boundary condition \eqref{eqn: tbc} is
\begin{equation*}
    \mathbf{n}_{t_0}\wedge\mathbf{u}_2 =  (-u_1)\,\dy\wedge\dz\wedge\dt + (-u_2)\,\dz\wedge\dx\wedge\dt + (-u_3)\,\dx\wedge\dy\wedge\dt,
\end{equation*}
so that $\mathbf{u}(\mathbf{x},t_0) = \mathbf{u}_{t_0}(\mathbf{x})$ on $\Gamma_{t_0}$.  
For the artificial boundary condition \eqref{eqn: ebc},
\begin{align*}
    **_\alpha\mathrm{d}_2 \mathbf{u}_2 &= (-\alpha(\partial_xu_1+\partial_yu_2+\partial_zu_3))\,\dx\wedge\dy\wedge\dz \\
    &\quad\quad+(-\varepsilon\partial_tu_1)\,\dy\wedge\dz\wedge\dt +(-\varepsilon\partial_tu_2)\,\dz\wedge\dx\wedge\dt\\
    &\quad\quad+(-\varepsilon\partial_tu_3)\,\dx\wedge\dy\wedge\dt,
\end{align*}
and applying $\iota_{\mathbf{n}_{T}}$ yields
\begin{equation*}
    \iota_{\mathbf{n}_{T}}(**_\alpha\mathrm{d}_2 \mathbf{u}_2) = (-\varepsilon \partial_t u_1)\,\dy\wedge\dz + (-\varepsilon \partial_t u_2)\,\dz\wedge\dx + (-\varepsilon \partial_t u_3)\,\dx\wedge\dy=0.
\end{equation*}
In summary, the boundary conditions \eqref{eqn: xbc}--\eqref{eqn: ebc} for the 2-form case are
\begin{equation*}
\left\{\begin{array}{rl}
   \text{$\mathbf{x}$-BC:}& \bu\cdot\mathbf{n}=0\quad \text{on } \Gamma_{\mathbf{x}},       \\[0.5ex]
    \text{$t$-BC:}& \mathbf{u}(\mathbf{x},t_0) = \mathbf{u}_{t_0}(\mathbf{x})\quad \text{on } \Gamma_{t_0},\\[0.5ex]
     \text{$\varepsilon$-BC:}& \varepsilon \bu_t(\mathbf{x},T) = \mathbf{0} \quad \text{on } \Gamma_{T}.
\end{array}\right.
\end{equation*}
As in the previous cases, the spatial and temporal conditions match the $H(\textnormal{div})$ data \eqref{eqn: div_boundary}--\eqref{eqn: div_initial}, and the artificial condition specifies the terminal velocity.

\subsubsection*{3-form problem}
Let $\mathbf{u}_3 = u\,\dx\wedge\dy\wedge\dz$ with initial 3-form $\mathbf{u}_{3,t_0} = u_{t_0}\,\dx\wedge\dy\wedge\dz$ as in \eqref{eqn: temporal_initial}.
The spatial boundary condition \eqref{eqn: xbc} becomes
\begin{align*}
    \mathbf{n}_\mathbf{x}\wedge\mathbf{u}_3 = 0\cdot\dx\wedge\dy\wedge\dz\wedge\dt = 0,
\end{align*}
and yields no meaningful spatial boundary condition.
This occurs because the 3-form problem involves only the temporal derivative and thus does not need a spatial boundary condition.
The temporal boundary condition \eqref{eqn: tbc} is
\begin{equation*}
    \mathbf{n}_{t_0}\wedge\mathbf{u}_3 = (-u)\,\dx\wedge\dy\wedge\dz\wedge\dt=\mathbf{n}_{t_0}\wedge\mathbf{u}_{3,t_0},
\end{equation*}
so that $u(\mathbf{x},t_0) = u_{t_0}(\mathbf{x})$ on $\Gamma_{t_0}$.
For the artificial boundary condition \eqref{eqn: ebc}, 
\begin{align*}
    **_\alpha\mathrm{d}_3 \mathbf{u}_3 = (\varepsilon\partial_tu)\,\dx\wedge\dy\wedge\dz\wedge\dt,
\end{align*}
and applying $\iota_{\mathbf{n}_{T}}$ gives
\begin{equation*}
    \iota_{\mathbf{n}_{T}}(**_\alpha\mathrm{d}_3 \mathbf{u}_3) = (\varepsilon\partial_tu)\,\dy\wedge\dy\wedge\dz=0.
\end{equation*}
In summary, the boundary conditions \eqref{eqn: xbc}--\eqref{eqn: ebc} for the 3-form case are
\begin{equation*}
\left\{\begin{array}{rl}
    \text{$\mathbf{x}$-BC:}& \text{not applicable}      ,       \\[0.5ex]
    \text{$t$-BC:}& u(\mathbf{x},t_0) = u_{t_0}(\mathbf{x})\quad \text{on } \Gamma_{t_0},\\[0.5ex]
      \text{$\varepsilon$-BC:}& \varepsilon u_t(\mathbf{x},T) = 0 \quad \text{on } \Gamma_{T}.
\end{array}\right.
\end{equation*}
The temporal condition is consistent with the one in the $L^2$-temporal problem \eqref{eqn: temporal_initial}, and the artificial condition specifies the terminal velocity for the temporal perturbation, which vanishes as $\varepsilon$ goes to $0$.

\section{Variational formulation and well-posedness}\label{sec: well-posed}
In this section, we establish the well-posedness of the unified problem within a variational setting.  To this end, we first introduce an exponentially-fitted spatiotemporal flux operator tailored to the unified convection-diffusion framework, and then derive a variational formulation for the unified equation \eqref{eqn: physic_preserving_equation} together with the boundary conditions \eqref{eqn: xbc}--\eqref{eqn: ebc}.

\subsection{Exponentially-fitted spatiotemporal flux operator}\label{sec: exp_fitted_spatiotemporal}

Exponentially-fitted flux operators are designed to symmetrize fluxes in convection-diffusion problems.  
As a motivating example (see \cite{bank2017arbitrary,lazarov2012exponential} for more details), let $\psi(x,y,z,t)$ be a potential function satisfying $\nabla_\mathbf{y}\psi = D^{-1}\mathbf{b}$. Then, the $H(\textbf{grad})$ convection-diffusion flux $J(u)$ defined in \eqref{eqn: flux} can be expressed as
\begin{equation*}
    J(u) = D\nabla_\mathbf{y} u + \mathbf{b}u = D e^{-\psi}\nabla_\mathbf{y}(e^\psi u).
\end{equation*}
In stationary convection-diffusion problems, $H(\textbf{curl})$- and $H(\textnormal{div})$-type fluxes can also be expressed in similar exponential forms \cite{wu2020simplex}.
Such symmetrized flux operators enable the application of the well-established analytical framework for purely diffusive problems (e.g., the Poisson equation) to convection-diffusion problems. This facilitates proofs of well-posedness and provides a foundation for designing stabilized numerical methods for convection-dominated regimes (e.g.,~\cite{adler2023stable,wu2020simplex}).

Our objective here is to generalize this exponentially-fitted flux operator to the spatiotemporal setting and to use it to reformulate the unified equation \eqref{eqn: physic_preserving_equation}.
We assume there exists a spatiotemporal potential function $\psi_0 = \psi_0(x,y,z,t)$ as a 0-form satisfying
\begin{equation*}
    \dzero\psi_0 = \mathbf{b}_1,
\end{equation*}
where $\mathbf{b}_1$ is the spatiotemporal convection 1-form introduced in \eqref{eqn: spatiotemporal_convection}.  
\begin{remark}
    The assumption $\mathrm{d}_0\psi_0 = \mathbf{b}_1$ may not hold in full
generality.  Indeed, a potential $\psi_0$ can exist only when the convection
field $\bm{\beta}$ is curl-free, because the de~Rham complex requires
every exact 1-form to be closed ($\mathrm{d}_1\mathrm{d}_0\psi_0 = 0$).  
Moreover, since the spacetime convection 1-form 
$\mathbf{b}_1=\alpha^{-1}\boldsymbol\beta\cdot \mathrm{d}\mathbf x-\varepsilon^{-1}\mathrm{d}t$
contains both spatial and temporal components, closedness additionally
requires $\alpha$ and $\bm{\beta}$ to be time-independent so that no
mixed space-time terms (such as $\mathrm{d}x\wedge\mathrm{d}t$) appear in $\mathrm d_1\mathbf b_1$.
Many practical applications (including fluid and drift-diffusion
models) feature irrotational convection fields. From a numerical viewpoint, local (spatial and temporal) approximations of the
convection field used in discretizations naturally yield locally exact
1-forms. The exponentially-fitted flux operator therefore remains well-defined even
when the physical convection field varies in time.
For the sake of simplicity in this section, we
retain the assumptions that $\nabla\times\bm{\beta}=0$ and
$\partial\alpha/\partial t=\partial\bm{\beta}/\partial t=\mathbf{0}$, so that the potential $\psi_0$ is globally well-defined.
\end{remark}
With this definition, the spatiotemporal convection-diffusion flux operator is expressed in exponential form as follows.
\begin{lemma}\label{lemma: exp_fitted_flux}
    For any $k$-form $\bm{\omega}_k$ and potential function $\psi_0$, the spatiotemporal flux operator satisfies
\begin{equation}\label{eqn: exponential_fitted_flux}
    \mathrm{J}_k\bm{\omega}_k = e^{-\psi_0}{\rm{d}}_k(e^{\psi_0}\bm{\omega}_k).
\end{equation}
\end{lemma}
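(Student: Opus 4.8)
The plan is to expand the right-hand side of \eqref{eqn: exponential_fitted_flux} using the graded Leibniz rule for the exterior derivative and to recover precisely the definition \eqref{eqn: defi_flux_J} of $\mathrm{J}_k$. The only two ingredients needed are the product rule for $\mathrm{d}_k$ and the defining relation $\mathrm{d}_0\psi_0 = \mathbf{b}_1$; no property of the space-time Hodge star and no special feature of the particular form of $\mathbf{b}_1$ in \eqref{eqn: spatiotemporal_convection} enters the argument, so the same computation settles all degrees $k$ at once.

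First I would treat $e^{\psi_0}$ as a $0$-form (a smooth function) and apply the Leibniz rule to the wedge product $e^{\psi_0}\bm{\omega}_k = e^{\psi_0}\wedge\bm{\omega}_k$. Since the left factor has degree $0$, the graded sign $(-1)^{0\cdot k}$ is trivial and
\begin{equation*}
    \mathrm{d}_k(e^{\psi_0}\bm{\omega}_k) = \mathrm{d}_0(e^{\psi_0})\wedge\bm{\omega}_k + e^{\psi_0}\,\mathrm{d}_k\bm{\omega}_k.
\end{equation*}
Next I would invoke the chain rule for the exterior derivative of a composite scalar, $\mathrm{d}_0(e^{\psi_0}) = e^{\psi_0}\,\mathrm{d}_0\psi_0$, and substitute the hypothesis $\mathrm{d}_0\psi_0 = \mathbf{b}_1$ to obtain $\mathrm{d}_0(e^{\psi_0}) = e^{\psi_0}\,\mathbf{b}_1$. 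Combining the two identities and factoring out the scalar $e^{\psi_0}$ gives
\begin{equation*}
    \mathrm{d}_k(e^{\psi_0}\bm{\omega}_k) = e^{\psi_0}\bigl(\mathbf{b}_1\wedge\bm{\omega}_k + \mathrm{d}_k\bm{\omega}_k\bigr).
\end{equation*}
Multiplying both sides by $e^{-\psi_0}$ and recognizing the bracketed expression as $\mathrm{J}_k\bm{\omega}_k$ from \eqref{eqn: defi_flux_J} closes the argument.

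The computation is essentially a one-line verification, so I do not anticipate any substantive analytical obstacle. The only point meriting explicit care is the bookkeeping of the graded Leibniz sign: because the prefactor $e^{\psi_0}$ has degree $0$, the wedge reduces to ordinary scalar multiplication and no $k$-dependent sign survives, which I would state plainly so the reader is reassured that the identity holds uniformly for the higher-degree cases ($k=1,2,3$) and not merely for scalars. It is also worth recording, consistent with the preceding remark, that the whole statement is conditional on the existence of $\psi_0$ with $\mathrm{d}_0\psi_0 = \mathbf{b}_1$, which requires $\mathbf{b}_1$ to be closed; granting this standing hypothesis, \eqref{eqn: exponential_fitted_flux} holds pointwise and for all degrees simultaneously.
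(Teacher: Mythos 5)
Your proposal is correct and follows essentially the same route as the paper's proof: apply the graded Leibniz rule with the degree-$0$ factor $e^{\psi_0}$, use $\mathrm{d}_0 e^{\psi_0} = e^{\psi_0}\mathrm{d}_0\psi_0 = e^{\psi_0}\mathbf{b}_1$, and cancel the exponential. The additional remarks on the vanishing graded sign and the standing closedness hypothesis on $\mathbf{b}_1$ are accurate but do not change the argument.
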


\begin{proof}
    We invoke the graded product rule for the exterior derivative:
\begin{equation*}
    \mathrm{d}_{m+k}(\bm{\eta}_m\wedge\bm{\omega}_k) = \mathrm{d}_m\bm{\eta}_m\wedge\bm{\omega}_k+(-1)^m\bm{\eta}_m\wedge\mathrm{d}_k\bm{\omega}_k,
\end{equation*}
valid for any $m$-form $\bm{\eta}_m$ and $k$-form $\bm{\omega}_k$.
Specializing to the case $m=0$ with $\bm{\eta}_0 = e^{\psi_0}$, 
\begin{align*}
    e^{-\psi_0}{\rm{d}}_k(e^{\psi_0}\bm{\omega}_k) &= e^{-\psi_0}\left( \dzero e^{\psi_0}\wedge \bm{\omega}_k + e^{\psi_0} \mathrm{d}_k\bm{\omega}_k\right)\\
    & = e^{-\psi_0}\left( e^{\psi_0}\mathbf{b}_1\wedge \bm{\omega}_k + e^{\psi_0} {\rm{d}}_k\bm{\omega}_k\right)\\
    & = \mathbf{b}_1\wedge\bm{\omega}_k + {\rm{d}}_k\bm{\omega}_k,
\end{align*}
where we used the chain rule $\dzero e^{\psi_0} = e^{\psi_0} \dzero\psi_0 = e^{\psi_0} \mathbf{b}_1$.
This establishes \eqref{eqn: exponential_fitted_flux}.
\end{proof}

Using the result of \Cref{lemma: exp_fitted_flux}, the unified spatiotemporal equation \eqref{eqn: physic_preserving_equation} becomes
\begin{equation*}
    \delta^{k}_{1\alpha}{\rm{J}}_k\mathbf{u}_k + {\rm{d}}_{k-1}\delta^{k-1}_{\alpha1}\mathbf{u}_k=\delta^{k}_{1\alpha} e^{-\psi_0}\mathrm{d}_ke^{\psi_0}\mathbf{u}_k + {\rm{d}}_{k-1}\delta^{k-1}_{\alpha1}\mathbf{u}_k=\mathbf{f}_k.
\end{equation*}
This equation shows that the unified spatiotemporal formulation modifies the Hodge-Laplacian structure through the exponential weight $e^{\psi_0}$: 
\[
    \delta_{1\alpha}^k \mathrm{d}_k\ (\text{in Hodge Laplacian})
    \quad \Rightarrow \quad
    (\delta_{1\alpha}^k e^{-\psi_0}) (\mathrm{d}_k e^{\psi_0}) \ (\text{in convection-diffusion}).
\]

\begin{remark}
    Reformulating the unified equation in terms of the exponentially-fitted flux operator has several advantages. Namely, we demonstrate in \Cref{subsec: well-posedness} that this approach enables a rigorous well-posedness analysis. Moreover, it provides a natural framework for transferring structural properties and numerical methods developed for the Hodge Laplacian to convection-diffusion problems (see \cite{adler2023stable,wu2020simplex} for examples).
\end{remark}

\subsection{Variational formulation}\label{subsec: variational}

We now derive the variational formulation of the unified spatiotemporal convection-diffusion problem. Recall that the governing equation \eqref{eqn: physic_preserving_equation} is given by
\begin{equation*}
\left(\delta^{k}_{1\alpha}{\rm{J}}_k + {\rm{d}}_{k-1}\delta^{k-1}_{\alpha1}\right)\mathbf{u}_k=\mathbf{f}_k,
\end{equation*}
subject to the boundary conditions \eqref{eqn: xbc}--\eqref{eqn: ebc}:
\begin{equation*}
\left\{\begin{array}{ll}
   \mathbf{n}_{\mathbf{x}}\wedge \mathbf{u}_k = 0&\quad \text{on } \Gamma_{\mathbf{x}}, \\[0.5ex]
     \mathbf{n}_{t_0}\wedge \mathbf{u}_k = \mathbf{n}_{t_0}\wedge \mathbf{u}_{k,t_0}&\quad \text{on } \Gamma_{t_0},\\[0.5ex]
    \iota_{\mathbf{n}_{T}}(**_\alpha\mathrm{d}_k \mathbf{u}_k) = 0&\quad \text{on } \Gamma_{T}.
\end{array}\right.
\end{equation*}
Due to the Dirichlet-type conditions imposed on $\Gamma_{\mathbf{x}}$ and $\Gamma_{t_0}$, we define
\begin{equation*}
    H\Lambda_D^k(\Omega): = \left\{ \mathbf{v}_k\in H\Lambda^k(\Omega) \;:\;\mathbf{n}_\mathbf{x}\wedge\mathbf{v}_k=0\text{ on }\Gamma_\mathbf{x},\  \mathbf{n}_{t_0}\wedge\mathbf{v}_k=0 \text{ on }\Gamma_{t_0}\right\}.
\end{equation*}
For simplicity, we assume $\mathbf{u}_{k,t_0}=0$ and introduce the $L^2$ inner products
\begin{equation*}
(\mathbf{u}_k,\mathbf{v}_k)_\Omega
:=\int_\Omega\mathbf{u}_k\wedge *\mathbf{v}_k\quad\text{and}\quad \langle \mathbf{u}_k,\mathbf{v}_k \rangle_{\Gamma_T} := \int_{\Gamma_T} \textnormal{tr}\left(\mathbf{u}_k\wedge*\mathbf{v}_k\right).
\end{equation*}
We then define the bilinear form associated with \eqref{eqn: physic_preserving_equation} by
\begin{align*}
        \mathcal{B}(\mathbf{u}_k,\mathbf{v}_k)&=(-1)^{(k+1)(4-(k+1))}(**_\alpha\mathrm{J}_k\mathbf{u}_k,\mathrm{d}_k\mathbf{v}_k)_\Omega
        +\langle \mathbf{u}_k,\mathbf{v}_k \rangle_{\Gamma_T},
    \end{align*}
    for all $\mathbf{u}_k,\mathbf{v}_k \in H\Lambda_D^k(\Omega)$.
 We note that the sign factor $(-1)^{(k+1)(4-(k+1))}$, when combined with the double Hodge star operator $*\,*_\alpha$, reduces to a scalar coefficient (either $\alpha$ or $\varepsilon$) depending on whether the underlying form contains the temporal component $\mathrm{d}t$ (see \Cref{remark: Hodge_star} for details).  The variational formulation is now given in the following lemma.

\begin{lemma}\label{lemma: variational_problem} The unified equation \eqref{eqn: physic_preserving_equation} with boundary conditions \eqref{eqn: xbc}--\eqref{eqn: ebc} is equivalent to the variational problem: find $\mathbf{u}_k\in H\Lambda^k_D(\Omega)$ such that
\begin{equation}\label{eqn: weak_formulation}
    \mathcal{B}(\mathbf{u}_k,\mathbf{v}_k) = (\mathbf{f}_k,\mathbf{v}_k)_{\Omega},\quad\forall \mathbf{v}_k\in H\Lambda^k_D(\Omega).
\end{equation}
\end{lemma}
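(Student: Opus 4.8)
The plan is to move between the strong form \eqref{eqn: physic_preserving_equation} and the variational identity \eqref{eqn: weak_formulation} by integration by parts for differential forms, with the Dirichlet data on $\Gamma_{\mathbf{x}}\cup\Gamma_{t_0}$ encoded in the space $H\Lambda^k_D(\Omega)$ and the artificial condition \eqref{eqn: ebc} appearing as the natural (Neumann-type) condition on $\Gamma_T$. The only tool I need is the graded Leibniz rule $\mathrm{d}(\bm\omega\wedge *_\alpha\bm\eta)=\mathrm{d}\bm\omega\wedge *_\alpha\bm\eta+(-1)^{\deg\bm\omega}\bm\omega\wedge\mathrm{d}(*_\alpha\bm\eta)$ together with Stokes' theorem $\int_\Omega\mathrm{d}(\cdot)=\int_{\partial\Omega}\mathrm{tr}(\cdot)$. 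Combining these with the definition $\delta^k_{1\alpha}=(-1)*\mathrm{d}*_\alpha$ in \eqref{eqn: defi_delta} and the double--Hodge-star reductions of \Cref{remark: Hodge_star} yields the weighted integration-by-parts identity
\begin{equation*}
(\delta^k_{1\alpha}\bm\phi,\mathbf{v}_k)_\Omega
=(-1)^{(k+1)(4-(k+1))}(**_\alpha\bm\phi,\mathrm{d}_k\mathbf{v}_k)_\Omega
-\int_{\partial\Omega}\mathrm{tr}(\mathbf{v}_k\wedge *_\alpha\bm\phi),
\end{equation*}
valid for any $(k+1)$-form $\bm\phi$ and $\mathbf{v}_k\in H\Lambda^k(\Omega)$. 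Specializing $\bm\phi=\mathrm{J}_k\mathbf{u}_k$ reproduces exactly the volume term of $\mathcal{B}$, so the sign factor $(-1)^{(k+1)(4-(k+1))}$ in the definition of $\mathcal{B}$ is precisely the one generated here.

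For the forward implication I would wedge \eqref{eqn: physic_preserving_equation} with $*\mathbf{v}_k$, integrate over $\Omega$, and apply the identity above to the flux term. The resulting volume integral is the first term of $\mathcal{B}(\mathbf{u}_k,\mathbf{v}_k)$, so everything reduces to the boundary integral $\int_{\partial\Omega}\mathrm{tr}(\mathbf{v}_k\wedge *_\alpha\mathrm{J}_k\mathbf{u}_k)$, which I split over $\Gamma_{\mathbf{x}}$, $\Gamma_{t_0}$, and $\Gamma_T$. On $\Gamma_{\mathbf{x}}$ and $\Gamma_{t_0}$ the quantity $\mathrm{tr}(\mathbf{v}_k\wedge\cdot)$ depends only on the tangential trace of $\mathbf{v}_k$, which vanishes since $\mathbf{n}_{\mathbf{x}}\wedge\mathbf{v}_k=0$ and $\mathbf{n}_{t_0}\wedge\mathbf{v}_k=0$ for $\mathbf{v}_k\in H\Lambda^k_D(\Omega)$; these contributions therefore drop out. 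On $\Gamma_T$ I split $*_\alpha\mathrm{J}_k\mathbf{u}_k=*_\alpha\mathrm{d}_k\mathbf{u}_k+*_\alpha(\mathbf{b}_1\wedge\mathbf{u}_k)$: the diffusive part is governed by $\iota_{\mathbf{n}_T}(**_\alpha\mathrm{d}_k\mathbf{u}_k)$ and vanishes by the $\varepsilon$-BC \eqref{eqn: ebc}, while the convective part, using $\mathbf{b}_1=\alpha^{-1}\bm\beta\cdot\mathrm{d}\mathbf{x}-\varepsilon^{-1}\mathrm{d}t$ and the fact that $*_\alpha$ restores a factor $\varepsilon$ on $\mathrm{d}t$-containing forms (\Cref{remark: Hodge_star}), collapses to exactly $\langle\mathbf{u}_k,\mathbf{v}_k\rangle_{\Gamma_T}$. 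The remaining Hodge-Laplacian term $\mathrm{d}_{k-1}\delta^{k-1}_{\alpha1}\mathbf{u}_k$ is controlled by the physics-preserving constraint identified in \Cref{sec: unified_spatiotemporal} (the divergence-, curl-, and gradient-free conditions), i.e.\ $\delta^{k-1}_{\alpha1}\mathbf{u}_k=0$ on solutions, so it makes no further contribution; collecting terms gives \eqref{eqn: weak_formulation}.

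Conversely, I would first test with $\mathbf{v}_k\in C_c^\infty\Lambda^k(\Omega)$, so that the $\Gamma_T$-term and all boundary integrals vanish; reversing the same integration by parts gives $(\delta^k_{1\alpha}\mathrm{J}_k\mathbf{u}_k-\mathbf{f}_k,\mathbf{v}_k)_\Omega=0$ for all such $\mathbf{v}_k$, whence the flux form $\delta^k_{1\alpha}\mathrm{J}_k\mathbf{u}_k=\mathbf{f}_k$ of \eqref{eqn: physic_preserving_equation} holds a.e.\ in $\Omega$ by the fundamental lemma of the calculus of variations (the co-closedness constraint again identifying this with the full equation). Subtracting this interior identity from the full variational statement and letting $\mathbf{v}_k$ range over forms with arbitrary trace on $\Gamma_T$ but vanishing tangential trace on $\Gamma_{\mathbf{x}}\cup\Gamma_{t_0}$ isolates the boundary identity on $\Gamma_T$; matching it against $\langle\mathbf{u}_k,\mathbf{v}_k\rangle_{\Gamma_T}$ and invoking the arbitrariness of the trace recovers $\iota_{\mathbf{n}_T}(**_\alpha\mathrm{d}_k\mathbf{u}_k)=0$, i.e.\ the $\varepsilon$-BC. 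The Dirichlet conditions on $\Gamma_{\mathbf{x}}$ and $\Gamma_{t_0}$ hold by construction of $H\Lambda^k_D(\Omega)$.

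I expect the main obstacle to be the boundary bookkeeping on $\Gamma_T$: one must verify, degree by degree, that $\iota_{\mathbf{n}_T}$ and $\mathrm{tr}$ act compatibly so that the diffusive flux is annihilated by \eqref{eqn: ebc} while the convective flux reproduces $\langle\mathbf{u}_k,\mathbf{v}_k\rangle_{\Gamma_T}$ with the correct sign and weight. The key point is that the factor $\varepsilon$ supplied by $*_\alpha$ on $\mathrm{d}t$-forms cancels the $\varepsilon^{-1}$ in $\mathbf{b}_1$, so that the convective boundary term is $\varepsilon$-independent. A secondary subtlety is the consistent treatment of the $\mathrm{d}_{k-1}\delta^{k-1}_{\alpha1}\mathbf{u}_k$ term, which relies on $\delta^{k-1}_{\alpha1}\mathbf{u}_k=0$; since this is exactly the structure embedded in the unified equation in \Cref{sec: unified_spatiotemporal}, no additional hypotheses are required. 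All Hodge-star signs should be tracked through \Cref{tab: Hodge_star} and \Cref{remark: Hodge_star} to confirm that the factor in $\mathcal{B}$ agrees with the one produced by the integration-by-parts identity.
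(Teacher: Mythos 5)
Your proposal follows essentially the same route as the paper's proof: test against $\mathbf{v}_k\in H\Lambda^k_D(\Omega)$, apply Green's formula for differential forms with the sign factor $(-1)^{(k+1)(4-(k+1))}$ arising from the double Hodge star, kill the $\Gamma_{\mathbf{x}}$ and $\Gamma_{t_0}$ traces via the wedge--contraction adjoint relation, use the $\varepsilon$-BC to reduce the $\Gamma_T$ term to the convective contribution $\langle\mathbf{u}_k,\mathbf{v}_k\rangle_{\Gamma_T}$ (with the $\varepsilon$ from $*_\alpha$ cancelling the $\varepsilon^{-1}$ in $\mathbf{b}_1$), and drop the $\mathrm{d}_{k-1}\delta^{k-1}_{\alpha 1}\mathbf{u}_k$ term via the embedded constraint $\delta^{k-1}_{\alpha 1}\mathbf{u}_k=0$. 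Your explicit converse argument (testing first with compactly supported forms, then isolating the $\Gamma_T$ identity) is a reasonable elaboration of the equivalence claim that the paper leaves implicit, but it does not change the substance of the argument.
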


\begin{proof}
Let $\mathbf{v}_k\in H\Lambda_D^k(\Omega)$ be arbitrary. Testing \eqref{eqn: physic_preserving_equation} against $\mathbf{v}_k$ and integrating over $\Omega$, we obtain
\begin{equation*}
    (\delta_{1\alpha}^k\mathrm{J}_k\mathbf{u}_k,\mathbf{v}_k)_\Omega+(\mathrm{d}_{k-1}\delta_{\alpha1}^{k-1}\mathbf{u}_k,\mathbf{v}_k)_\Omega = \left(\mathbf{f}_k,\mathbf{v}_k\right)_\Omega.
\end{equation*}
For the first term, we use the relation $$\delta^{k}_{1\alpha} =(-1)^{(k+1)(4-(k+1))} (-1)*\left(\mathrm{d}_{4-(k+1)}\right)**\,*_\alpha=(-1)^{(k+1)(4-(k+1))}\delta^k**_\alpha$$ in four dimensions and apply Green's formula for differential forms \cite{mitrea2016hodge}:
    \begin{align*}
    (\delta_{1\alpha}^k\mathrm{J}_k\mathbf{u}_k,\mathbf{v}_k)_{\Omega}&=(-1)^{(k+1)(4-(k+1))}(\delta^k(**_\alpha\mathrm{J}_k\mathbf{u}_k),\mathbf{v}_k)_\Omega\\
    &=(-1)^{(k+1)(4-(k+1))}[\left(**_\alpha\mathrm{J}_k\mathbf{u}_k,\mathrm{d}_k\mathbf{v}_k\right)_\Omega-\langle\iota_{\mathbf{n}_{T}
    }(**_\alpha\mathrm{J}_k\mathbf{u}_k),\mathbf{v}_k\rangle_{\Gamma_T}].
\end{align*}
The Dirichlet-type boundary conditions prescribed in \eqref{eqn: xbc}--\eqref{eqn: tbc} eliminate the spatial and initial boundary terms via the wedge-contraction adjoint relation:
\begin{align*}
    &\langle\iota_{\mathbf{n}_{\mathbf{x}}
    }(**_\alpha\mathrm{J}_k\mathbf{u}_k),\mathbf{v}_k\rangle_{\Gamma_\mathbf{x}}=\langle**_\alpha\mathrm{J}_k\mathbf{u}_k,\mathbf{n}_{\mathbf{x}}\wedge\mathbf{v}_k\rangle_{\Gamma_\mathbf{x}}=0,\\
    &\langle\iota_{\mathbf{n}_{t_0}
    }(**_\alpha\mathrm{J}_k\mathbf{u}_k),\mathbf{v}_k\rangle_{\Gamma_{t_0}}=\langle**_\alpha\mathrm{J}_k\mathbf{u}_k,\mathbf{n}_{t_0}\wedge\mathbf{v}_k\rangle_{\Gamma_{t_0}}=0.
\end{align*}
Hence, only the terminal boundary term remains.  
Using \eqref{eqn: ebc}, we obtain
\begin{align*}
    & \quad \ \langle\iota_{\mathbf{n}_T}(**_\alpha\mathrm{J}_k\mathbf{u}_k),\mathbf{v}_k\rangle_{\Gamma_T} = \langle\iota_{\mathbf{n}_T}(**_\alpha(\mathrm{d}_k\mathbf{u}_k+\mathbf{b}_1\wedge\mathbf{u}_k)),\mathbf{v}_k\rangle_{\Gamma_T}\\
    &=\langle\iota_{\mathbf{n}_T}(**_\alpha(\mathbf{b}_1\wedge\mathbf{u}_k)),\mathbf{v}_k\rangle_{\Gamma_T}=(-1)^{(k+1)(4-(k+1))+1}\langle \mathbf{u}_k,\mathbf{v}_k\rangle_{\Gamma_T},
\end{align*}
where we have used the fact that, upon computing each $k$-form case,
\begin{equation*}
    \iota_{\mathbf{n}_T}(**_\alpha(\mathbf{b}_1\wedge\mathbf{u}_k)) = (-1)^{(k+1)(4-(k+1))+1}\mathbf{u}_k.
\end{equation*}
Collecting the above results, we obtain
\begin{equation*}
    (\delta_{1\alpha}^k\mathrm{J}_k\mathbf{u}_k,\mathbf{v}_k)_{\Omega} = (-1)^{(k+1)(4-(k+1))}(**_\alpha\mathrm{J}_k\mathbf{u}_k,\mathrm{d}_k\mathbf{v}_k)_\Omega + \langle \mathbf{u}_k,\mathbf{v}_k \rangle_{\Gamma_T}.
\end{equation*}

For the second term, the physics-preserving structure of \eqref{eqn: physic_preserving_equation} implies
\begin{equation*}
    \delta_{\alpha1}^{k-1}\mathbf{u}_k=0,
\end{equation*}
which corresponds to the divergence-, curl-, or gradient-free conditions depending on $k$. Hence, this term vanishes, completing the proof.
\end{proof}

\begin{remark}
    The variational problem \eqref{eqn: weak_formulation} exhibits a familiar form from standard convection-diffusion formulations, naturally induced by the physics-preserving structure.
However, the artificial temporal perturbation introduces an additional boundary condition \eqref{eqn: ebc} on $\Gamma_T$, producing an extra boundary contribution, $\langle \mathbf{u}_k,\mathbf{v}_k\rangle_{\Gamma_T}$, in the bilinear form $\mathcal{B}(\mathbf{u}_k,\mathbf{v}_k)$.
This boundary term does not pose any analytical difficulty, though, since by choosing $\mathbf{v}_k = e^{\psi_0}\mathbf{u}_k\not=0$, it contributes a strictly positive quantity (see \Cref{subsec: well-posedness} for details).
\end{remark}

\subsection{Well-posedness}\label{subsec: well-posedness}
To start, we define the solution space norm, 
\begin{equation*}
\|\mathbf{v}_k\|_{H\Lambda_D^k(\Omega)}^2: = \|\mathrm{d}_k\mathbf{v}_k\|_{L^2\Lambda^{k+1}(\Omega)}^2+ \|\mathbf{v}_k\|_{L^2\Lambda^{k}(\Omega)}^2 + \|\mathbf{v}_k\|_{L^2\Lambda^k(\Gamma_T)}^2,
\end{equation*}
where
\begin{equation*}
\|\mathbf{v}_k\|_{L^2\Lambda^{k}(\Omega)}^2: = (\mathbf{v}_k,\mathbf{v}_k)_\Omega,\quad\|\mathbf{v}_k\|_{L^2\Lambda^{k}(\Gamma_T)}^2: = \langle\mathbf{v}_k,\mathbf{v}_k\rangle_{\Gamma_T}.
\end{equation*}
We first establish a norm equivalence between $\|\mathbf{v}_k\|_{H\Lambda_D^k(\Omega)}$ and the exponentially-weighted norm, $\|e^{\psi_0}\mathbf{v}_k\|_{H\Lambda_D^k(\Omega)}$.
\begin{lemma}\label{lemma: norm_equiv}
    For any $\mathbf{v}_k\in H\Lambda_D^k(\Omega)$, there exist positive constants $\gamma_*$ and $\gamma^*$, depending only on the artificial parameter $\varepsilon$, the spatial diffusion coefficient $\alpha$, and the convection field $\bm{\beta}$, such that
    \begin{equation*}
      \gamma_*\|e^{\psi_0}\mathbf{v}_k\|_{H\Lambda_D^k(\Omega)}\leq \|\mathbf{v}_k\|_{H\Lambda_D^k(\Omega)}\leq \gamma^* \|e^{\psi_0}\mathbf{v}_k\|_{H\Lambda_D^k(\Omega)}.
    \end{equation*}
\end{lemma}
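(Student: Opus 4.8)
The plan is to exploit two facts already at hand: the boundedness of the potential $\psi_0$ on the bounded domain, and the product rule for $\mathrm{d}_k$ recorded in \Cref{lemma: exp_fitted_flux}. Since $\Omega$ is bounded and $\psi_0$ is smooth on $\bar\Omega$, there are constants $0<c_-\le c_+<\infty$ with $c_-\le e^{\psi_0}\le c_+$ pointwise on $\bar\Omega$, and hence on $\Gamma_T$; consequently $e^{-\psi_0}$ is bounded between $c_+^{-1}$ and $c_-^{-1}$. Because the $L^2$ inner product satisfies $\mathbf{u}_k\wedge *\mathbf{v}_k=\langle\mathbf{u}_k,\mathbf{v}_k\rangle\,\mathrm{vol}$ pointwise and $e^{\psi_0}$ is a scalar weight, the $L^2\Lambda^k(\Omega)$ and $L^2\Lambda^k(\Gamma_T)$ contributions to the norm transform by pure multiplication, giving at once
\[
c_-^2\|\mathbf{v}_k\|_{L^2\Lambda^k(\Omega)}^2\le\|e^{\psi_0}\mathbf{v}_k\|_{L^2\Lambda^k(\Omega)}^2\le c_+^2\|\mathbf{v}_k\|_{L^2\Lambda^k(\Omega)}^2,
\]
together with the analogous bounds on $\Gamma_T$. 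These two terms therefore cause no difficulty.

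The only genuinely coupled term is $\mathrm{d}_k(e^{\psi_0}\mathbf{v}_k)$, which does not reduce to scalar multiplication. Here I would invoke \Cref{lemma: exp_fitted_flux} in the form $\mathrm{d}_k(e^{\psi_0}\mathbf{v}_k)=e^{\psi_0}\mathrm{J}_k\mathbf{v}_k=e^{\psi_0}(\mathrm{d}_k\mathbf{v}_k+\mathbf{b}_1\wedge\mathbf{v}_k)$. Assuming $\alpha$ is bounded below and $\bm{\beta}$ is bounded, so that with $\varepsilon>0$ fixed the convection $1$-form $\mathbf{b}_1$ has bounded coefficients, the wedge product obeys the pointwise estimate $|\mathbf{b}_1\wedge\mathbf{v}_k|\le C_{\mathbf{b}}|\mathbf{v}_k|$ with $C_{\mathbf{b}}$ depending only on $\|\mathbf{b}_1\|_{L^\infty}$. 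Combining this with the bounds on $e^{\psi_0}$ and the elementary inequality $(a+b)^2\le 2a^2+2b^2$ yields
\[
\|\mathrm{d}_k(e^{\psi_0}\mathbf{v}_k)\|_{L^2\Lambda^{k+1}(\Omega)}^2\le 2c_+^2\|\mathrm{d}_k\mathbf{v}_k\|_{L^2\Lambda^{k+1}(\Omega)}^2+2c_+^2C_{\mathbf{b}}^2\|\mathbf{v}_k\|_{L^2\Lambda^k(\Omega)}^2.
\]
The essential point is that the extra $\|\mathbf{v}_k\|_{L^2\Lambda^k(\Omega)}^2$ produced by the product rule is already present in the full $H\Lambda_D^k(\Omega)$ norm, so it is absorbed harmlessly; this is precisely why the equivalence holds despite $\mathrm{d}_k$ failing to commute with multiplication by $e^{\psi_0}$.

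Summing the three contributions gives the upper bound $\|e^{\psi_0}\mathbf{v}_k\|_{H\Lambda_D^k(\Omega)}\le \gamma_*^{-1}\|\mathbf{v}_k\|_{H\Lambda_D^k(\Omega)}$ for an explicit $\gamma_*$ depending on $c_\pm$ and $C_{\mathbf{b}}$, hence on $\varepsilon,\alpha,\bm{\beta}$. For the reverse inequality I would argue by symmetry: writing $\mathbf{v}_k=e^{-\psi_0}(e^{\psi_0}\mathbf{v}_k)$ and using $\mathrm{d}_0(-\psi_0)=-\mathbf{b}_1$, the product rule applied with the weight $e^{-\psi_0}$ gives $\mathrm{d}_k\mathbf{v}_k=e^{-\psi_0}\bigl(\mathrm{d}_k(e^{\psi_0}\mathbf{v}_k)-\mathbf{b}_1\wedge(e^{\psi_0}\mathbf{v}_k)\bigr)$, so the same estimates run verbatim with $e^{-\psi_0}$ in place of $e^{\psi_0}$ and yield $\|\mathbf{v}_k\|_{H\Lambda_D^k(\Omega)}\le\gamma^*\|e^{\psi_0}\mathbf{v}_k\|_{H\Lambda_D^k(\Omega)}$. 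The main obstacle, and essentially the only nontrivial hypothesis, is guaranteeing $\mathbf{b}_1\in L^\infty$ so that the cross term from the product rule is controlled by the $L^2$ part of the norm; with that in hand the remainder is boundedness of a smooth scalar weight combined with Young's inequality.
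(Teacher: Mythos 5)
Your proposal is correct and follows essentially the same route as the paper: both arguments reduce the $L^2$ and boundary terms to scalar multiplication by the bounded weight $e^{\pm\psi_0}$, and handle the exterior-derivative term via the graded product rule (equivalently, \Cref{lemma: exp_fitted_flux}), absorbing the resulting cross term $\mathbf{b}_1\wedge\mathbf{v}_k$ into the $L^2$ part of the $H\Lambda_D^k$ norm using boundedness of $\mathbf{b}_1$. Your insistence on $\mathbf{b}_1\in L^\infty$ for the pointwise wedge estimate is in fact the correct hypothesis (the paper's constant is written as an $L^2$ norm of $\mathbf{b}_1$, which would not suffice for that step), so no gap on your side.
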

\begin{proof}
Starting with the upper bound,
note that $\|\mathbf{v}_k\|_{H\Lambda_D^k(\Omega)}$ consists of an exterior derivative term, an $L^2$ mass term, and a boundary term.
Considering the exterior derivative term, we apply the graded product rule and the triangle inequality:
    \begin{align*}
\|\mathrm{d}_k\mathbf{v}_k\|_{L^2\Lambda^{k+1}(\Omega)} \leq \|\mathrm{d}_0e^{-\psi_0}\wedge e^{\psi_0}\mathbf{v}_k\|_{L^2\Lambda^{k+1}(\Omega)}+\|e^{-\psi_0}\wedge\mathrm{d}_k(e^{\psi_0}\mathbf{v}_k)\|_{L^2\Lambda^{k+1}(\Omega)}.
    \end{align*}
For the first term, using $\mathrm{d}_0e^{-\psi_0} = e^{-\psi_0}\mathbf{b}_1$,
    \begin{align*}
        \|\mathrm{d}_0e^{-\psi_0}\wedge e^{\psi_0}\mathbf{v}_k\|_{L^2\Lambda^{k+1}(\Omega)}&=\|e^{-\psi_0}\mathbf{b}_1\wedge e^{\psi_0}\mathbf{v}_k\|_{L^2\Lambda^{k+1}(\Omega)}\\
        &\leq C_{\mathbf{b}_1}C_{\psi_0}\| e^{\psi_0}\mathbf{v}_k\|_{L^2\Lambda^{k}(\Omega)}\\
        &\leq C_{\mathbf{b}_1}C_{\psi_0}\| e^{\psi_0}\mathbf{v}_k\|_{H\Lambda_D^k(\Omega)}.
    \end{align*}
    Here, $C_{\mathbf{b}_1}=\|\mathbf{b}_1\|_{L^2\Lambda^1(\Omega)}$,
    and $C_{\psi_0}=\|e^{-\psi_0}\|_{L^\infty\Lambda^0(\Omega)}$, where
$L^\infty\Lambda^k(\Omega)$ denotes the space of essentially bounded k-forms.
The second term is straightforwardly bounded using the boundedness of $e^{-\psi_0}$:
    \begin{align*}
        \|e^{-\psi_0}\wedge\mathrm{d}_k(e^{\psi_0}\mathbf{v}_k)\|_{L^2\Lambda^{k+1}(\Omega)}&\leq C_{\psi_0}\|\mathrm{d}_k(e^{\psi_0}\mathbf{v}_k)\|_{L^2\Lambda^{k+1}(\Omega)}\\&\leq C_{\psi_0}\|e^{\psi_0}\mathbf{v}_k\|_{H\Lambda_D^k(\Omega)}.
    \end{align*}
Combining the bounds on the two terms, 
    \begin{equation*}
        \|\mathrm{d}_k\mathbf{v}_k\|_{L^2\Lambda^{k+1}(\Omega)}\leq \left(C_{\mathbf{b}_1}+1\right)C_{\psi_0}\|e^{\psi_0}\mathbf{v}_k\|_{H\Lambda_D^k(\Omega)}.
    \end{equation*}
Similarly, for the $L^2$ mass and boundary terms, the boundedness of $e^{-\psi_0}$ directly yields
    \begin{align*}
        &\|\mathbf{v}_k\|_{L^2\Lambda^k(\Omega)}\leq C_{\psi_0}\|e^{\psi_0}\mathbf{v}_k\|_{L^2\Lambda^k(\Omega)}\leq C_{\psi_0}\|e^{\psi_0}\mathbf{v}_k\|_{H\Lambda_D^k(\Omega)},\\
        &\|\mathbf{v}_k\|_{L^2\Lambda^k(\Gamma_T)}\leq C_{\psi_0}\|e^{\psi_0}\mathbf{v}_k\|_{L^2\Lambda^k(\Gamma_T)}\leq C_{\psi_0}\|e^{\psi_0}\mathbf{v}_k\|_{H\Lambda_D^k(\Omega)}.
    \end{align*}
Collecting the three terms establishes the upper bound with $\gamma^*:=\left(C_{\mathbf{b}_1}+3\right)C_{\psi_0}$.

    The lower bound follows similarly by applying the product rule and using the boundedness of $e^{\psi_0}$:
\begin{align*}
    \|\mathrm{d}_k(e^{\psi_0}\mathbf{v}_k)\|_{L^2\Lambda^{k+1}(\Omega)}&\leq \|\mathrm{d}_0 e^{\psi_0}\wedge\mathbf{v}_k\|_{L^2\Lambda^{k+1}(\Omega)}+\|e^{\psi_0}\wedge\mathrm{d}_k\mathbf{v}_k\|_{L^2\Lambda^{k+1}(\Omega)}\\
    &\leq\|e^{\psi_0}\mathbf{b}_1\wedge\mathbf{v}_k\|_{L^2\Lambda^{k+1}(\Omega)}+C_{\psi_0}^\dagger\|\mathrm{d}_k\mathbf{v}_k\|_{L^2\Lambda^{k+1}(\Omega)}\\
    &\leq C_{\mathbf{b}_1}C_{\psi_0}^\dagger\|\mathbf{v}_k\|_{L^2\Lambda^{k}(\Omega)}  +C_{\psi_0}^\dagger\|\mathrm{d}_k\mathbf{v}_k\|_{L^2\Lambda^{k+1}(\Omega)}\\
    &\leq\left(C_{\mathbf{b}_1}+1\right)C_{\psi_0}^\dagger\|\mathbf{v}_k\|_{H\Lambda_D^k(\Omega)},
\end{align*}
where $C_{\psi_0}^\dagger=\|e^{\psi_0}\|_{L^\infty\Lambda^0(\Omega)}$. Moreover,
\begin{align*}
    \|e^{\psi_0}\mathbf{v}_k\|_{L^2\Lambda^k(\Omega)}\leq C_{\psi_0}^\dagger\|\mathbf{v}_k\|_{H\Lambda_D^k(\Omega)}, \quad \|e^{\psi_0}\mathbf{v}_k\|_{L^2\Lambda^k(\Gamma_T)}\leq C_{\psi_0}^\dagger\|\mathbf{v}_k\|_{H\Lambda_D^k(\Omega)}.
\end{align*}
This completes the proof of the lower bound, with $\gamma_*:=\left[\left(C_{\mathbf{b}_1}+3\right)C_{\psi_0}^\dagger\right]^{-1}$.
\end{proof}

Next, using this norm equivalence, we establish an inf-sup condition and continuity of the bilinear form.
\begin{lemma}[Inf-sup condition]
    There exists a constant $c_{\varepsilon,\alpha,\bm{\beta}}>0$, depending only on the artificial parameter $\varepsilon$, the spatial diffusion coefficient $\alpha$, and the convection field $\bm{\beta}$, such that
    \begin{equation}\label{eqn: corecivity}
        \inf_{\mathbf{u}_k\in H\Lambda_D^k(\Omega)}\sup_{\mathbf{v}_k\in H\Lambda_D^k(\Omega)}\frac{\mathcal{B}(\mathbf{u}_k,\mathbf{v}_k)}{\|\mathbf{u}_k\|_{H\Lambda_D^k(\Omega)}\|\mathbf{v}_k\|_{H\Lambda_D^k(\Omega)}}\geq c_{\varepsilon,\alpha,\bm{\beta}}.
     \end{equation}
\end{lemma}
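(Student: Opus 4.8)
The plan is to establish the stronger property of coercivity, from which the inf-sup bound \eqref{eqn: corecivity} follows by taking the supremizing test function. Motivated by the exponentially-fitted structure and the remark following \Cref{lemma: variational_problem}, for a given $\mathbf{u}_k \in H\Lambda_D^k(\Omega)$ I would test \eqref{eqn: weak_formulation} against $\mathbf{v}_k := e^{\psi_0}\mathbf{u}_k$. Since $e^{\psi_0}$ is a positive scalar $0$-form, the wedge products $\mathbf{n}_{\mathbf{x}}\wedge\mathbf{v}_k$ and $\mathbf{n}_{t_0}\wedge\mathbf{v}_k$ still vanish, so $\mathbf{v}_k \in H\Lambda_D^k(\Omega)$ is admissible. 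Writing $w_k := e^{\psi_0}\mathbf{u}_k = \mathbf{v}_k$ and invoking \Cref{lemma: exp_fitted_flux}, one has $\mathrm{J}_k\mathbf{u}_k = e^{-\psi_0}\mathrm{d}_k w_k$, so the first term of $\mathcal{B}$ collapses to the manifestly symmetric quantity $(-1)^{(k+1)(4-(k+1))}(**_\alpha\,e^{-\psi_0}\mathrm{d}_k w_k,\mathrm{d}_k w_k)_\Omega$.

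Next I would extract positivity. By \Cref{remark: Hodge_star}, the factor $(-1)^{(k+1)(4-(k+1))}**_\alpha$ acts on a $(k+1)$-form as multiplication by $\alpha$ on components without $\mathrm{d}t$ and by $\varepsilon$ on components containing $\mathrm{d}t$, both strictly positive. Pulling out the pointwise bound $e^{-\psi_0}\geq (C_{\psi_0}^\dagger)^{-1}$ and setting $\alpha_0 := \inf_\Omega\alpha > 0$, the first term is bounded below by a weighted Dirichlet energy controlling $\alpha_0$ times the spatial components and $\varepsilon$ times the temporal components of $\mathrm{d}_k w_k$, while the boundary term gives $\langle\mathbf{u}_k,e^{\psi_0}\mathbf{u}_k\rangle_{\Gamma_T}\geq (C_{\psi_0}^\dagger)^{-1}\|w_k\|_{L^2\Lambda^k(\Gamma_T)}^2$. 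At this stage $\mathcal{B}(\mathbf{u}_k,\mathbf{v}_k)$ already controls $\|\mathrm{d}_k w_k\|_{L^2\Lambda^{k+1}(\Omega)}^2$ and $\|w_k\|_{L^2\Lambda^k(\Gamma_T)}^2$, but not the mass term $\|w_k\|_{L^2\Lambda^k(\Omega)}^2$ appearing in $\|\cdot\|_{H\Lambda_D^k(\Omega)}$.

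Supplying that missing mass term is the main obstacle, and it is handled through the space-time structure rather than a purely spatial Poincaré inequality, which would fail for $k=1,2$ where $\mathrm{d}_k$ alone does not control the codifferential. Because every $\mathbf{u}_k$—and therefore $w_k$—has vanishing $\mathrm{d}t$-components by \Cref{remark: rule_for_uk}, the temporal part of $\mathrm{d}_k w_k$ equals $\sum_I \partial_t w_I\,\mathrm{d}t\wedge\mathrm{d}x^I$ and is $L^2$-orthogonal to the spatial part, so $\mathrm{d}_k w_k$ controls $\partial_t$ of every coefficient $w_I$. Together with the homogeneous initial trace $w_k|_{\Gamma_{t_0}} = 0$, a one-dimensional Friedrichs inequality in time (with constant $C_T$ depending only on $T - t_0$) gives $\sum_I\|\partial_t w_I\|^2 \geq C_T^{-1}\|w_k\|_{L^2\Lambda^k(\Omega)}^2$. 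Since the first term of $\mathcal{B}(\mathbf{u}_k,\mathbf{v}_k)$ already dominates $\varepsilon\sum_I\|\partial_t w_I\|^2$, combining these contributions produces a constant $c_0 = c_0(\alpha_0,\varepsilon,C_{\psi_0}^\dagger,C_T) > 0$ with $\mathcal{B}(\mathbf{u}_k,\mathbf{v}_k)\geq c_0\|w_k\|_{H\Lambda_D^k(\Omega)}^2 = c_0\|e^{\psi_0}\mathbf{u}_k\|_{H\Lambda_D^k(\Omega)}^2$.

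Finally I would return to the original variable through the norm equivalence of \Cref{lemma: norm_equiv}. Using $\|e^{\psi_0}\mathbf{u}_k\|_{H\Lambda_D^k(\Omega)}\geq (\gamma^*)^{-1}\|\mathbf{u}_k\|_{H\Lambda_D^k(\Omega)}$ and $\|\mathbf{v}_k\|_{H\Lambda_D^k(\Omega)} = \|e^{\psi_0}\mathbf{u}_k\|_{H\Lambda_D^k(\Omega)}$, the quotient obeys $\mathcal{B}(\mathbf{u}_k,\mathbf{v}_k)/(\|\mathbf{u}_k\|_{H\Lambda_D^k(\Omega)}\|\mathbf{v}_k\|_{H\Lambda_D^k(\Omega)})\geq c_0/\gamma^*$. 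Taking the supremum over $\mathbf{v}_k$ and the infimum over $\mathbf{u}_k$ establishes \eqref{eqn: corecivity} with $c_{\varepsilon,\alpha,\bm{\beta}} := c_0/\gamma^*$, which by construction degenerates like $\varepsilon$ as $\varepsilon\to 0$, consistent with the singularly perturbed, convection-dominated regime.
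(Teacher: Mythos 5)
Your proposal follows the paper's proof almost step for step: the same test function $\mathbf{v}_k=e^{\psi_0}\mathbf{u}_k$, the same use of \Cref{lemma: exp_fitted_flux} and \Cref{remark: Hodge_star} to extract a positive weighted Dirichlet energy plus the $\Gamma_T$ boundary term, and the same final passage through \Cref{lemma: norm_equiv} to convert a coercivity bound in $e^{\psi_0}\mathbf{u}_k$ into the inf-sup estimate. The only place you diverge is the recovery of the $L^2$ mass term, where the paper simply invokes the Poincar\'e--Friedrichs inequality of finite element exterior calculus, while you construct an explicit one-dimensional Friedrichs inequality in the time direction.

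That substituted step contains a genuine gap. You justify the temporal Friedrichs argument by asserting that every $\mathbf{u}_k$ (hence every $w_k=e^{\psi_0}\mathbf{u}_k$) has vanishing $\mathrm{d}t$-components ``by \Cref{remark: rule_for_uk}.'' But that remark describes only how the particular physical solutions and sources are represented as forms; it is not part of the definition of $H\Lambda_D^k(\Omega)$, and the infimum in \eqref{eqn: corecivity} ranges over \emph{all} of $H\Lambda_D^k(\Omega)$. The boundary condition $\mathbf{n}_{t_0}\wedge\mathbf{v}_k=0$ only annihilates the trace at $t_0$ of the components \emph{without} $\mathrm{d}t$; it imposes nothing on the $\mathrm{d}t$-carrying components, whose time derivatives do not appear in $\mathrm{d}_k w_k$ in the clean orthogonal way you describe. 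So your argument establishes coercivity only on the subspace of forms with no temporal components, not on the full space over which the lemma is stated. To close the gap you would need either to restrict the trial space accordingly (which would change the statement), or to fall back on a Poincar\'e-type inequality valid for general elements of $H\Lambda_D^k(\Omega)$ --- which is precisely what the paper does, albeit tersely, by citing the Poincar\'e--Friedrichs inequality from the FEEC framework. Everything else in your write-up, including the $\min(\varepsilon,\alpha)$-type degeneration of the constant as $\varepsilon\to 0$, is consistent with the paper.
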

\begin{proof}
We begin by utilizing the result from \Cref{lemma: variational_problem} and the definition of the exponentially-fitted flux \eqref{eqn: exponential_fitted_flux} to rewrite the bilinear form as
\begin{align*}
    \mathcal{B}(\mathbf{u}_k,\mathbf{v}_k)&=(-1)^{(k+1)(4-(k+1))}(**_\alpha\mathrm{J}_k\mathbf{u}_k,\mathrm{d}_k\mathbf{v}_k)_\Omega + \langle\mathbf{u}_k,\mathbf{v}_k\rangle_{\Gamma_T}\\
    &=(-1)^{(k+1)(4-(k+1))}(**_\alpha e^{-\psi_0}\mathrm{d}_k(e^{\psi_0}\mathbf{u}_k),\mathrm{d}_k\mathbf{v}_k)_\Omega + \langle\mathbf{u}_k,\mathbf{v}_k\rangle_{\Gamma_T}.
\end{align*}
Choosing the test function \(\mathbf{v}_k = e^{\psi_0}\mathbf{u}_k\not=0\) and using the property of the scaled double Hodge star operator in \Cref{remark: Hodge_star},
we obtain 
\begin{equation*}
    \mathcal{B}(\mathbf{u}_k,e^{\psi_0}\mathbf{u}_k)\geq \frac{\min(\varepsilon,\alpha)}{C_{\psi_0}^\dagger}\left(\|\mathrm{d}_k(e^{\psi_0}\mathbf{u}_k)\|_{L^2\Lambda^{k+1}(\Omega)}^2 + \|e^{\psi_0}\mathbf{u}_k\|_{L^2\Lambda^k(\Gamma_T)}^2\right).
\end{equation*}
Using the definition of the solution space norm, the Poincar\'e-Friedrichs inequality \cite{arnold2018finite}, and the norm equivalence in \Cref{lemma: norm_equiv}, we obtain the estimate
\begin{equation*}
    \mathcal{B}(\mathbf{u}_k,e^{\psi_0}\mathbf{u}_k)\geq C\|e^{\psi_0}\mathbf{u}_k\|_{H\Lambda_D^k(\Omega)}^2\geq (C/\gamma^*)\|\mathbf{u}_k\|_{H\Lambda_D^k(\Omega)}\|e^{\psi_0}\mathbf{u}_k\|_{H\Lambda_D^k(\Omega)}.
\end{equation*}
This implies the stated inf-sup condition \eqref{eqn: corecivity}
with $c_{\varepsilon,\alpha,\bm{\beta}}=C/\gamma^*$.
\end{proof}

\begin{lemma}[Continuity]
    There exists a constant $C_{\varepsilon,\alpha,\bm{\beta}}>0$, depending only on $\varepsilon$, $\alpha$, and $\bm{\beta}$, such that
    \begin{equation}\label{eqn: continuity}
        |\mathcal{B}(\mathbf{u}_k,\mathbf{v}_k)|\leq C_{\varepsilon,\alpha,\bm{\beta}}\|\mathbf{u}_k\|_{H\Lambda_D^k(\Omega)}\|\mathbf{v}_k\|_{H\Lambda_D^k(\Omega)}.
    \end{equation}
\end{lemma}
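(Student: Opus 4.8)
The plan is to bound the two constituents of $\mathcal{B}$ separately by Cauchy--Schwarz, exploiting the scalar reduction of the scaled double Hodge star recorded in \Cref{remark: Hodge_star}. Recall from \Cref{lemma: variational_problem} that
\[
\mathcal{B}(\mathbf{u}_k,\mathbf{v}_k)=(-1)^{(k+1)(4-(k+1))}(**_\alpha\mathrm{J}_k\mathbf{u}_k,\mathrm{d}_k\mathbf{v}_k)_\Omega+\langle\mathbf{u}_k,\mathbf{v}_k\rangle_{\Gamma_T},
\]
so it suffices to control the interior pairing and the terminal boundary pairing and then add the resulting estimates.

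For the interior term, I would first observe that, by \Cref{remark: Hodge_star} applied to the $(k+1)$-form $\mathrm{J}_k\mathbf{u}_k$, the operator $(-1)^{(k+1)(4-(k+1))}**_\alpha$ reduces to multiplication by $\alpha$ on basis components without $\mathrm{d}t$ and by $\varepsilon$ on those containing $\mathrm{d}t$. Consequently the pairing $\langle\mathbf{w},\mathbf{z}\rangle_\alpha:=(-1)^{(k+1)(4-(k+1))}(**_\alpha\mathbf{w},\mathbf{z})_\Omega$ is a genuine weighted $L^2$ inner product on $L^2\Lambda^{k+1}(\Omega)$ (positive definite since $\alpha,\varepsilon>0$), dominated componentwise by $\max(\|\alpha\|_{L^\infty(\Omega)},\varepsilon)$ times the standard one. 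Cauchy--Schwarz in $\langle\cdot,\cdot\rangle_\alpha$ then yields
\[
\bigl|\langle\mathrm{J}_k\mathbf{u}_k,\mathrm{d}_k\mathbf{v}_k\rangle_\alpha\bigr|\le \max(\|\alpha\|_{L^\infty(\Omega)},\varepsilon)\,\|\mathrm{J}_k\mathbf{u}_k\|_{L^2\Lambda^{k+1}(\Omega)}\,\|\mathrm{d}_k\mathbf{v}_k\|_{L^2\Lambda^{k+1}(\Omega)}.
\]
Expanding the flux as $\mathrm{J}_k\mathbf{u}_k=\mathrm{d}_k\mathbf{u}_k+\mathbf{b}_1\wedge\mathbf{u}_k$ and invoking the triangle inequality together with the wedge-multiplication bound $\|\mathbf{b}_1\wedge\mathbf{u}_k\|_{L^2\Lambda^{k+1}(\Omega)}\le C_{\mathbf{b}_1}\|\mathbf{u}_k\|_{L^2\Lambda^k(\Omega)}$ (the same constant $C_{\mathbf{b}_1}$ used in \Cref{lemma: norm_equiv}) gives $\|\mathrm{J}_k\mathbf{u}_k\|_{L^2\Lambda^{k+1}(\Omega)}\le(1+C_{\mathbf{b}_1})\|\mathbf{u}_k\|_{H\Lambda_D^k(\Omega)}$, while $\|\mathrm{d}_k\mathbf{v}_k\|_{L^2\Lambda^{k+1}(\Omega)}\le\|\mathbf{v}_k\|_{H\Lambda_D^k(\Omega)}$ by definition of the norm. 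For the boundary term, a direct Cauchy--Schwarz on $\Gamma_T$ gives $|\langle\mathbf{u}_k,\mathbf{v}_k\rangle_{\Gamma_T}|\le\|\mathbf{u}_k\|_{L^2\Lambda^k(\Gamma_T)}\|\mathbf{v}_k\|_{L^2\Lambda^k(\Gamma_T)}\le\|\mathbf{u}_k\|_{H\Lambda_D^k(\Omega)}\|\mathbf{v}_k\|_{H\Lambda_D^k(\Omega)}$, since each factor is a summand of the full norm. Adding the two estimates then establishes \eqref{eqn: continuity} with $C_{\varepsilon,\alpha,\bm{\beta}}:=\max(\|\alpha\|_{L^\infty(\Omega)},\varepsilon)(1+C_{\mathbf{b}_1})+1$.

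The only delicate point, and hence the main thing to get right, is the anisotropic reduction of $**_\alpha$: because the weighting differs between the spatial ($\alpha$) and temporal ($\varepsilon$) components, one must treat $\langle\cdot,\cdot\rangle_\alpha$ as a single weighted inner product and bound it uniformly by $\max(\|\alpha\|_{L^\infty(\Omega)},\varepsilon)$, rather than attempting to commute $*_\alpha$ through the standard inner product. Once the scalar reduction of \Cref{remark: Hodge_star} is invoked, the estimate is entirely routine Cauchy--Schwarz, with no coercivity-type subtleties; in particular, boundedness of $\alpha$ (and positivity of $\alpha,\varepsilon$) is the only regularity assumption needed on the data.
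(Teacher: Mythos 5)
Your proof is correct and follows essentially the same route as the paper: split $\mathcal{B}$ into the interior and terminal boundary pairings, use the scalar reduction of $*\,*_\alpha$ from \Cref{remark: Hodge_star} to absorb the weight into a constant, expand $\mathrm{J}_k\mathbf{u}_k=\mathrm{d}_k\mathbf{u}_k+\mathbf{b}_1\wedge\mathbf{u}_k$, and apply Cauchy--Schwarz term by term with the wedge bound $C_{\mathbf{b}_1}$. The paper's version is terser (it writes the constant as $\max(\alpha,1)\max(C_{\mathbf{b}_1},1)$ rather than your $\max(\|\alpha\|_{L^\infty},\varepsilon)(1+C_{\mathbf{b}_1})+1$), but the argument is the same.
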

\begin{proof}
The continuity condition can be shown using the Cauchy-Schwarz inequality. 
Indeed, for all $\mathbf{u}_k,\mathbf{v}_k\in H\Lambda_D^k(\Omega)$, we have
\begin{align*}
    \left|\mathcal{B}(\mathbf{u}_k,\mathbf{v}_k)\right|&\leq C_{\alpha}( \left|\left(\mathrm{d}_k\mathbf{u}_k,\mathrm{d}_k\mathbf{v}_k\right)_\Omega\right| + \left|\left(\mathbf{b}_1\wedge\mathbf{u}_k,\mathrm{d}_k\mathbf{v}_k\right)_\Omega\right| + \left|\langle\mathbf{u}_k,\mathbf{v}_k\rangle_{\Gamma_T}\right|)\\
    &\leq C_{\varepsilon,\alpha,\bm{\beta}}\|\mathbf{u}_k\|_{H\Lambda_D^k(\Omega)}\|\mathbf{v}_k\|_{H\Lambda_D^k(\Omega)},
\end{align*}
where $C_{\alpha} = \max(\alpha,1)$ and $C_{\varepsilon,\alpha,\bm{\beta}}=C_\alpha(\max(C_{\mathbf{b}_1},1))$.    
\end{proof}

The established inf-sup condition together with the continuity of the bilinear
form $\mathcal{B}(\cdot,\cdot)$ implies the well-posedness of the variational
problem \eqref{eqn: weak_formulation} via the Banach-Ne\v{c}as-Babu\v{s}ka
theorem.
\begin{theorem}[Well-posedness]
Assume that $\mathbf{f}_k\in L^2\Lambda^k(\Omega)$. Suppose that the bilinear form $\mathcal{B}(\cdot,\cdot)$ is continuous \eqref{eqn: continuity} and satisfies an inf-sup condition \eqref{eqn: corecivity} on $H\Lambda^k_D(\Omega)$.
Then, there exists a unique solution
$\mathbf{u}_k \in H\Lambda_D^k(\Omega)$ to
problem \eqref{eqn: weak_formulation}, and that solution satisfies
\begin{equation*}
\|\mathbf{u}_k\|_{H\Lambda_D^k(\Omega)}
\leq C \|\mathbf{f}_k\|_{L^2\Lambda^k(\Omega)},
\end{equation*}
where the constant $C>0$ is independent of $\mathbf{f}_k$.
\end{theorem}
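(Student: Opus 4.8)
The plan is to deduce the theorem from the Banach--Ne\v{c}as--Babu\v{s}ka (BNB) theorem applied on the Hilbert space $H\Lambda_D^k(\Omega)$, taking the trial and test spaces to coincide, $V=W=H\Lambda_D^k(\Omega)$. The BNB theorem requires three ingredients: boundedness of the load functional, the inf-sup condition (call it BNB1), and the adjoint non-degeneracy condition (BNB2). Continuity and BNB1 are precisely the two preceding lemmas, so the only genuinely new work is verifying BNB2 and checking that the right-hand side is a bounded functional; the stability constant then follows directly from the inf-sup constant.

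First I would record that $\mathbf{v}_k\mapsto(\mathbf{f}_k,\mathbf{v}_k)_\Omega$ is a bounded linear functional: by Cauchy--Schwarz,
\[
|(\mathbf{f}_k,\mathbf{v}_k)_\Omega|\le\|\mathbf{f}_k\|_{L^2\Lambda^k(\Omega)}\|\mathbf{v}_k\|_{L^2\Lambda^k(\Omega)}\le\|\mathbf{f}_k\|_{L^2\Lambda^k(\Omega)}\|\mathbf{v}_k\|_{H\Lambda_D^k(\Omega)},
\]
so its dual norm is at most $\|\mathbf{f}_k\|_{L^2\Lambda^k(\Omega)}$.

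Second, and this is the crux, I would establish BNB2: for every $\mathbf{v}_k\neq0$ there is a trial function making $\mathcal{B}$ strictly positive. Mirroring the inf-sup proof, I would choose $\mathbf{u}_k=e^{-\psi_0}\mathbf{v}_k$. Since $e^{-\psi_0}$ is a smooth, strictly positive, bounded scalar, this $\mathbf{u}_k$ lies in $H\Lambda_D^k(\Omega)$, because the Dirichlet traces $\mathbf{n}_\mathbf{x}\wedge\mathbf{u}_k$ and $\mathbf{n}_{t_0}\wedge\mathbf{u}_k$ still vanish once the scalar factor is pulled through the wedge. Inserting it into the exponentially-fitted form of $\mathcal{B}$ collapses $\mathrm{d}_k(e^{\psi_0}\mathbf{u}_k)$ to $\mathrm{d}_k\mathbf{v}_k$, giving
\[
\mathcal{B}(e^{-\psi_0}\mathbf{v}_k,\mathbf{v}_k)=(-1)^{(k+1)(4-(k+1))}(**_\alpha e^{-\psi_0}\mathrm{d}_k\mathbf{v}_k,\mathrm{d}_k\mathbf{v}_k)_\Omega+\langle e^{-\psi_0}\mathbf{v}_k,\mathbf{v}_k\rangle_{\Gamma_T}.
\]
Using that $(-1)^{(k+1)(4-(k+1))}**_\alpha$ reduces to multiplication by $\alpha$ or $\varepsilon$ (see \Cref{remark: Hodge_star}) and that $e^{-\psi_0}\ge(C_{\psi_0}^\dagger)^{-1}>0$, both terms are nonnegative and bounded below, so
\[
\mathcal{B}(e^{-\psi_0}\mathbf{v}_k,\mathbf{v}_k)\ge\frac{\min(\varepsilon,\alpha)}{C_{\psi_0}^\dagger}\left(\|\mathrm{d}_k\mathbf{v}_k\|_{L^2\Lambda^{k+1}(\Omega)}^2+\|\mathbf{v}_k\|_{L^2\Lambda^k(\Gamma_T)}^2\right).
\]
Invoking the Poincar\'e--Friedrichs inequality exactly as in the inf-sup lemma controls the $L^2$ mass term as well, which yields $\mathcal{B}(e^{-\psi_0}\mathbf{v}_k,\mathbf{v}_k)\ge C\|\mathbf{v}_k\|_{H\Lambda_D^k(\Omega)}^2>0$ whenever $\mathbf{v}_k\neq0$, hence BNB2.

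With continuity, BNB1, BNB2, and the bounded load functional in hand, the BNB theorem delivers a unique $\mathbf{u}_k\in H\Lambda_D^k(\Omega)$ solving \eqref{eqn: weak_formulation}, together with the a priori bound $\|\mathbf{u}_k\|_{H\Lambda_D^k(\Omega)}\le c_{\varepsilon,\alpha,\bm{\beta}}^{-1}\|\mathbf{f}_k\|_{L^2\Lambda^k(\Omega)}$, so that $C=1/c_{\varepsilon,\alpha,\bm{\beta}}$ is independent of $\mathbf{f}_k$. I expect the main obstacle to be the verification of BNB2: the stated inf-sup condition alone only furnishes injectivity and the a priori estimate, whereas surjectivity of the associated operator requires the separate adjoint argument above. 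The near-symmetry of the exponentially-fitted bilinear form is what makes this tractable, since the choice $\mathbf{u}_k=e^{-\psi_0}\mathbf{v}_k$ reuses the norm-equivalence and Poincar\'e--Friedrichs machinery already developed for the inf-sup estimate.
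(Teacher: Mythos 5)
Your proposal is correct and follows the same overall route as the paper: the paper's proof is a two-line appeal to the Banach--Ne\v{c}as--Babu\v{s}ka theorem, noting only that $\mathbf{v}_k\mapsto(\mathbf{f}_k,\mathbf{v}_k)_\Omega$ is bounded and that continuity and the inf-sup condition are already in hand. Where you go further is in explicitly verifying the adjoint non-degeneracy condition (BNB2), which the paper leaves implicit; your choice $\mathbf{u}_k=e^{-\psi_0}\mathbf{v}_k$ is the natural mirror of the test function $\mathbf{v}_k=e^{\psi_0}\mathbf{u}_k$ used in the inf-sup lemma, and the computation $\mathrm{J}_k(e^{-\psi_0}\mathbf{v}_k)=e^{-\psi_0}\mathrm{d}_k\mathbf{v}_k$ makes the resulting expression manifestly positive, so the argument closes correctly (modulo the same reliance on the Poincar\'e--Friedrichs inequality that the paper itself makes). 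This is a legitimate and worthwhile strengthening rather than a different method: since the bilinear form is not symmetric, the one-sided inf-sup condition alone gives only injectivity and the a priori bound, and your separate BNB2 argument is what actually secures surjectivity.
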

\begin{proof}
Since $\mathbf{f}_k \in L^2\Lambda^k(\Omega)$, the linear functional
$\mathbf{v}_k \mapsto (\mathbf{f}_k,\mathbf{v}_k)_\Omega$
is bounded on $H\Lambda_D^k(\Omega)$.
The result follows directly from the Banach-Ne\v{c}as-Babu\v{s}ka theorem (see e.g., \cite{ern2004theory}).
\end{proof}

\section{Convergence analysis}\label{sec: convergence}

Let $\mathbf{u}_k^\varepsilon$ be the solution of the spatiotemporal problem~\eqref{eqn: physic_preserving_equation}, which includes an artificial temporal perturbation $\varepsilon>0$, and let $\mathbf{u}_k^0$ be the solution of the original convection-diffusion problems~\eqref{sys: conv_diff_grad}--\eqref{sys: conv_diff_temporal}. 
The goal of this section is to establish the convergence of the perturbed solution $\mathbf{u}_k^\varepsilon$ to the limit solution $\mathbf{u}_k^0$ as $\varepsilon \to 0$. We drop the subscript $k$ in this section for the sake of simplicity.

We denote the spatial Sobolev space associated with form degree $k$ as $V_\mathbf{x}$ (e.g., $V_\mathbf{x} = H_0^1(\Omega_\mathbf{x})$ for $k=0$, and $V_\mathbf{x}=H_0(\textbf{curl};\Omega_\mathbf{x})$ for $k=1$), so that $$\|\cdot\|_{L^2(\Omega_\mathbf{x})}\leq \|\cdot\|_{V_\mathbf{x}} = \left(\|\cdot\|_{L^2(\Omega_\mathbf{x})}^2 + |\cdot|_{V_\mathbf{x}}^2\right)^{1/2},$$
where $|\cdot|_{V_\mathbf{x}}$ denotes the corresponding seminorm (e.g., the $H^1$ or $H(\mathbf{curl})$ seminorm).
The operator $\mathcal{L}$ represents the spatial differential operator containing both diffusion and convection terms, for instance, $\mathcal{L}(\cdot) = \nabla\!\cdot(\alpha\nabla(\cdot) + \bm{\beta}(\cdot))$ when $k=0$.
For a Banach space $V$ and $1\leq p\leq\infty$, we denote by $L^p([t_0,T];V)$
the Bochner space of strongly measurable functions \cite{adams2003sobolev,evans2022partial}.
That is, if $v\in L^p([t_0,T];V)$, then $v(t)\in V$ for almost every 
$t\in[t_0,T]$, and $\|v(t)\|_V$ is $L^p$-integrable in time.

\begin{theorem}\label{thm: convergence}
Let $\mathbf{e} = \mathbf{u}^\varepsilon - \mathbf{u}^0$ be the error function.
Assume the regularity 
$\mathbf{u}^0,\mathbf{u}^\varepsilon \in L^\infty([t_0,T];V_\mathbf{x})$, $\mathbf{u}_t^0,\mathbf{u}_t^\varepsilon \in L^\infty([t_0,T];V_\mathbf{x})$, and $\mathbf{u}_{tt}^0,\mathbf{u}_{tt}^\varepsilon\in L^2([t_0,T];L^2(\Omega_\mathbf{x}))$.
Then, for sufficiently small $\varepsilon$, the following energy estimate holds for all $t\in [t_0,T]$: 
\begin{align*}
    &\frac{1}{2}\|\mathbf{e}(t)\|_{L^2(\Omega_\mathbf{x})}^2+c_0\int_{t_0}^t\|\mathbf{e}(s)\|_{V_\mathbf{x}}^2\,\mathrm{d}s\\
    &\qquad\qquad\leq\frac{\varepsilon}{2}e^{2C_{\ell}(t-t_0)}\int_{t_0}^t\|\mathbf{u}_{tt}^0(s)\|_{L^2(\Omega_\mathbf{x})}^2\,\mathrm{d}s + \varepsilon\|\mathbf{e}_t(t)\|_{L^2(\Omega_\mathbf{x})}\|\mathbf{e}(t)\|_{V_\mathbf{x}}.
\end{align*}
Here, $C_{\ell}=\|\bm{\beta}\|_{L^\infty(\Omega_\mathbf{x})}/(2\alpha_0)>0$, and $c_0=\alpha_0/2-\varepsilon((C_{\ell})^2+1/2)$.
\end{theorem}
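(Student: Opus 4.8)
The plan is to prove the stated estimate by an energy (Gr\"onwall) argument applied to the error equation, treating the artificial second-order term $-\varepsilon\mathbf{u}_{tt}$ as a singular perturbation and controlling it by integration by parts in time. First I would reduce the unified perturbed system to a common evolution equation for the error. Because the physics-preserving structure enforces the auxiliary constraint $\delta_{\alpha1}^{k-1}\mathbf{u}=0$ (the divergence-, curl-, or gradient-free condition established in \Cref{sec: unified_spatiotemporal}), on the solution every $k$-form problem collapses to the common form $-\varepsilon\mathbf{u}_{tt}^\varepsilon + \mathbf{u}_t^\varepsilon - \mathcal{L}\mathbf{u}^\varepsilon = \mathbf{f}$, while the unperturbed model reads $\mathbf{u}_t^0 - \mathcal{L}\mathbf{u}^0 = \mathbf{f}$. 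Writing $\mathbf{u}_{tt}^\varepsilon = \mathbf{e}_{tt} + \mathbf{u}_{tt}^0$ and subtracting yields the error equation $-\varepsilon\mathbf{e}_{tt} + \mathbf{e}_t - \mathcal{L}\mathbf{e} = \varepsilon\mathbf{u}_{tt}^0$, supplemented by $\mathbf{e}(t_0)=0$ (identical initial data) and the terminal relation $\varepsilon\mathbf{e}_t(T)=0$ inherited from the $\varepsilon$-BC \eqref{eqn: ebc}.

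Next I would test this equation with $\mathbf{e}$ in the spatial inner product $(\cdot,\cdot)_{L^2(\Omega_\mathbf{x})}$ and integrate over $[t_0,t]$. The term $(\mathbf{e}_t,\mathbf{e})$ contributes $\tfrac12\tfrac{d}{dt}\|\mathbf{e}\|_{L^2(\Omega_\mathbf{x})}^2$; integrating $-(\mathcal{L}\mathbf{e},\mathbf{e})$ by parts in space (the $\mathbf{x}$-BC removes the boundary flux) and bounding the convective cross term $(\bm{\beta}\cdot\nabla\mathbf{e},\mathbf{e})$ by Young's inequality produces a coercivity lower bound of the form $\tfrac{\alpha_0}{2}|\mathbf{e}|_{V_\mathbf{x}}^2$ minus a multiple of $\|\mathbf{e}\|_{L^2(\Omega_\mathbf{x})}^2$ governed by $C_\ell$, the latter being reserved for Gr\"onwall. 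The decisive manipulation is the temporal integration by parts of $-\varepsilon\int_{t_0}^t(\mathbf{e}_{ss},\mathbf{e})\,\mathrm{d}s$: using $(\mathbf{e}_{ss},\mathbf{e}) = \tfrac{d}{ds}(\mathbf{e}_s,\mathbf{e}) - \|\mathbf{e}_s\|_{L^2}^2$, the lower endpoint vanishes since $\mathbf{e}(t_0)=0$, producing the nonnegative term $+\varepsilon\int_{t_0}^t\|\mathbf{e}_s\|_{L^2}^2\,\mathrm{d}s$ together with the leftover boundary contribution $-\varepsilon(\mathbf{e}_t(t),\mathbf{e}(t))$, which is precisely the origin of the final term $\varepsilon\|\mathbf{e}_t(t)\|_{L^2}\|\mathbf{e}(t)\|_{V_\mathbf{x}}$ after Cauchy--Schwarz and $\|\cdot\|_{L^2(\Omega_\mathbf{x})}\leq\|\cdot\|_{V_\mathbf{x}}$.

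Finally I would bound the forcing $\varepsilon\int_{t_0}^t(\mathbf{u}_{ss}^0,\mathbf{e})\,\mathrm{d}s$ by Cauchy--Schwarz and Young, splitting off $\tfrac{\varepsilon}{2}\|\mathbf{u}_{ss}^0\|_{L^2}^2$ and $\tfrac{\varepsilon}{2}\|\mathbf{e}\|_{L^2}^2$. Collecting the genuinely positive contributions $\tfrac12\|\mathbf{e}(t)\|_{L^2}^2$ and $c_0\int_{t_0}^t\|\mathbf{e}\|_{V_\mathbf{x}}^2\,\mathrm{d}s$ on the left—where $c_0=\alpha_0/2-\varepsilon(C_\ell^2+\tfrac12)$ is strictly positive exactly when $\varepsilon$ is small enough to absorb the $\varepsilon\|\mathbf{e}\|_{L^2}^2$ and $\varepsilon\|\mathbf{e}_s\|_{L^2}^2$ corrections—and moving the residual $\|\mathbf{e}\|_{L^2}^2$ terms to the right, Gr\"onwall's inequality absorbs the latter and generates the exponential weight $e^{2C_\ell(t-t_0)}$ multiplying $\tfrac{\varepsilon}{2}\int_{t_0}^t\|\mathbf{u}_{ss}^0\|_{L^2}^2\,\mathrm{d}s$, delivering the claimed estimate.

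I expect the main obstacle to be the control of the singular term $-\varepsilon\mathbf{e}_{tt}$ in the \emph{absence} of an initial condition on $\mathbf{e}_t$. Since only $\mathbf{e}(t_0)=0$ and the terminal condition $\varepsilon\mathbf{e}_t(T)=0$ are available, the temporal integration by parts cannot eliminate the upper-endpoint term for a generic $t\in[t_0,T]$; it must be retained as the remainder $\varepsilon\|\mathbf{e}_t(t)\|_{L^2}\|\mathbf{e}(t)\|_{V_\mathbf{x}}$ and shown to be $O(\varepsilon)$ using the assumed $L^\infty([t_0,T];V_\mathbf{x})$ bounds on $\mathbf{u}_t^0$ and $\mathbf{u}_t^\varepsilon$. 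The same smallness of $\varepsilon$ is what guarantees $c_0>0$, so the regularity hypotheses on $\mathbf{u}^0,\mathbf{u}^\varepsilon$ and their time derivatives are precisely what make the energy argument—and the passage $\varepsilon\to0$ that yields $\|\mathbf{e}(t)\|_{L^2}\to0$—rigorous.
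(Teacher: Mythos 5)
Your overall strategy is the same as the paper's: derive the error equation $-\varepsilon\mathbf{e}_{tt}+\mathbf{e}_t-\mathcal{L}\mathbf{e}=\varepsilon\mathbf{u}_{tt}^0$, test with $\mathbf{e}$, use the coercivity bound $-(\mathcal{L}\mathbf{e},\mathbf{e})\ge\tfrac{\alpha_0}{2}\|\mathbf{e}\|_{V_\mathbf{x}}^2-C_\ell\|\mathbf{e}\|_{L^2}^2$, rewrite $-\varepsilon(\mathbf{e}_{tt},\mathbf{e})=-\varepsilon\tfrac{\mathrm{d}}{\mathrm{d}t}(\mathbf{e}_t,\mathbf{e})+\varepsilon\|\mathbf{e}_t\|_{L^2}^2$, and close with Gr\"onwall. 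However, there is a genuine gap in your Gr\"onwall step. You propose to keep $\tfrac12\|\mathbf{e}(t)\|_{L^2}^2$ on the left, move the leftover endpoint term $-\varepsilon(\mathbf{e}_t(t),\mathbf{e}(t))$ to the right, and then apply Gr\"onwall to $\|\mathbf{e}\|_{L^2}^2$. But $\varepsilon(\mathbf{e}_t(t),\mathbf{e}(t))$ is a non-monotone function of the upper limit $t$, so the integral form of Gr\"onwall does not leave it sitting additively outside the exponential: it gets convolved with the kernel $e^{2C_\ell(t-s)}$, producing an extra term of the form $2C_\ell\varepsilon\int_{t_0}^t(\mathbf{e}_s(s),\mathbf{e}(s))\,e^{2C_\ell(t-s)}\,\mathrm{d}s$ that is absent from the stated estimate. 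Your argument would still show $\|\mathbf{e}(t)\|_{L^2}=O(\sqrt{\varepsilon})$ under the stated regularity, but it does not deliver the inequality as written.

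The missing device is the modified energy functional $\Phi(t)=\tfrac12\|\mathbf{e}(t)\|_{L^2(\Omega_\mathbf{x})}^2-\varepsilon(\mathbf{e}_t(t),\mathbf{e}(t))_{\Omega_\mathbf{x}}$, which satisfies $\Phi(t_0)=0$ and absorbs the troublesome cross term \emph{into} the quantity being Gr\"onwalled. To produce the Gr\"onwall term $2C_\ell\Phi(t)$ on the right one must add and subtract $2\varepsilon C_\ell(\mathbf{e}_t,\mathbf{e})$ and bound the surplus by $2\varepsilon C_\ell\|\mathbf{e}_t\|_{L^2}\|\mathbf{e}\|_{V_\mathbf{x}}\le\varepsilon\|\mathbf{e}_t\|_{L^2}^2+\varepsilon C_\ell^2\|\mathbf{e}\|_{V_\mathbf{x}}^2$; the first piece cancels the $\varepsilon\|\mathbf{e}_t\|_{L^2}^2$ produced by the temporal integration by parts, and the second is exactly the source of the $\varepsilon C_\ell^2$ contribution in $c_0=\alpha_0/2-\varepsilon(C_\ell^2+1/2)$, which your accounting cannot explain. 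After Gr\"onwall one gets $\Phi(t)+c_0\int_{t_0}^t\|\mathbf{e}\|_{V_\mathbf{x}}^2\,\mathrm{d}s\le\tfrac{\varepsilon}{2}e^{2C_\ell(t-t_0)}\int_{t_0}^t\|\mathbf{u}_{tt}^0\|_{L^2}^2\,\mathrm{d}s$, and only then is $\Phi(t)$ unpacked and Cauchy--Schwarz applied to yield the final term $\varepsilon\|\mathbf{e}_t(t)\|_{L^2}\|\mathbf{e}(t)\|_{V_\mathbf{x}}$ outside the exponential. A minor additional point: the terminal condition $\varepsilon\mathbf{e}_t(T)=0$ you invoke is not needed anywhere; the endpoint term at time $t$ is simply retained and is $O(\varepsilon)$ by the assumed $L^\infty([t_0,T];V_\mathbf{x})$ regularity of $\mathbf{u}_t^0$ and $\mathbf{u}_t^\varepsilon$.
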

\begin{proof}
    Subtracting the equations satisfied by $\mathbf{u}^\varepsilon$ and $\mathbf{u}^0$ yields the error equation
\begin{equation*}
    -\varepsilon \mathbf{e}_{tt} + \mathbf{e}_t - \mathcal{L}\mathbf{e} = \varepsilon \mathbf{u}_{tt}^0.
\end{equation*}
We test this equation with $\mathbf{e}$ and integrate over the spatial domain $\Omega_\mathbf{x}$.
The first term is written as
\begin{equation*}
    -\varepsilon(\mathbf{e}_{tt}(t),\mathbf{e}(t))_{\Omega_\mathbf{x}} = -\varepsilon\frac{\mathrm{d}}{\mathrm{d}t}(\mathbf{e}_t(t),\mathbf{e}(t))_{\Omega_\mathbf{x}} + \varepsilon \|\mathbf{e}_t(t)\|_{L^2(\Omega_\mathbf{x})}^2.
\end{equation*}
By the same argument,
\begin{equation*}
    (\mathbf{e}_t(t),\mathbf{e}(t))_{\Omega_\mathbf{x}} = \frac{1}{2}\frac{\mathrm{d}}{\mathrm{d}t}\|\mathbf{e}(t)\|_{L^2(\Omega_\mathbf{x})}^2.
\end{equation*}
For the spatial convection-diffusion operator, integration by parts
with homogeneous Dirichlet-type boundary conditions (e.g., \eqref{eqn: grad_boundary}, \eqref{eqn: curl_boundary}, and \eqref{eqn: div_boundary}) gives
\begin{equation*}
    -(\mathcal{L}\mathbf{e}(t),\mathbf{e}(t))_{\Omega_\mathbf{x}}\geq \frac{\alpha_0}{2}\|\mathbf{e}(t)\|_{V_\mathbf{x}}^2 - C_{\ell}\|\mathbf{e}(t)\|_{L^2(\Omega_\mathbf{x})}^2,
\end{equation*}
where $\alpha \ge \alpha_0 > 0$ is the ellipticity constant of the diffusion coefficient,
and $C_{\ell} = \|\bm{\beta}\|_{L^\infty(\Omega_\mathbf{x})}/(2\alpha_0)>0$.
By combining the above results and applying the Cauchy-Schwarz inequality, we obtain
\begin{align*}
    &\frac{1}{2}\frac{\mathrm{d}}{\mathrm{d}t}\|\mathbf{e}(t)\|_{L^2(\Omega_\mathbf{x})}^2 -\varepsilon\frac{\mathrm{d}}{\mathrm{d}t}(\mathbf{e}_t(t),\mathbf{e}(t))_{\Omega_\mathbf{x}} + \frac{\alpha_0}{2}\|\mathbf{e}(t)\|_{V_\mathbf{x}}^2 + \varepsilon \|\mathbf{e}_t(t)\|_{L^2(\Omega_\mathbf{x})}^2\\
    &\qquad\qquad\leq C_{\ell}\|\mathbf{e}(t)\|_{L^2(\Omega_\mathbf{x})}^2 + \varepsilon\|\mathbf{u}_{tt}^0(t)\|_{L^2(\Omega_\mathbf{x})}\|\mathbf{e}(t)\|_{V_\mathbf{x}} \\
    &\qquad\qquad\qquad\qquad- 2\varepsilon C_{\ell}(\mathbf{e}_t(t),\mathbf{e}(t))_{\Omega_\mathbf{x}} + 2\varepsilon C_{\ell}\|\mathbf{e}_t(t)\|_{L^2(\Omega_\mathbf{x})}\|\mathbf{e}(t)\|_{V_\mathbf{x}}.
\end{align*}
We introduce the modified energy functional,
\begin{equation*}
    \Phi(t) = \frac{1}{2}\|\mathbf{e}(t)\|_{L^2(\Omega_\mathbf{x})}^2 - \varepsilon(\mathbf{e}_t(t),\mathbf{e}(t))_{\Omega_\mathbf{x}},
\end{equation*}
which allows the above inequality to be rewritten as
\begin{align*}
    \frac{\mathrm{d}}{\mathrm{d}t}\Phi(t) + c_0\|\mathbf{e}(t)\|_{V_\mathbf{x}}^2
    \leq 2C_{\ell}\Phi(t) + \frac{\varepsilon}{2}\|\mathbf{u}_{tt}^0(t)\|_{L^2(\Omega_\mathbf{x})}^2,
\end{align*}
where $$c_0 = \frac{\alpha_0}{2} - \varepsilon \left((C_{\ell})^2+\frac{1}{2}\right) > 0,$$ for sufficiently small $\varepsilon$.
Applying Gr\"{o}nwall’s inequality and noting that $\Phi(t_0)=0$ (since $\mathbf{e}(\mathbf{x},t_0)=\mathbf{0}$ in $\Omega_\mathbf{x}$), we obtain
\begin{align*}
    \Phi(t) + c_0\int_{t_0}^t\|\mathbf{e}(s)\|_{V_\mathbf{x}}^2\,\mathrm{d}s
    \leq \frac{\varepsilon}{2}e^{2C_{\ell}(t-t_0)}\int_{t_0}^t\|\mathbf{u}_{tt}^0(s)\|_{L^2(\Omega_\mathbf{x})}^2\,\mathrm{d}s.
\end{align*}
Finally, substituting the definition of $\Phi(t)$ and using the Cauchy-Schwarz inequality for the mixed term, we conclude that for all $t\in[t_0,T]$,
\begin{align*}
    &\frac{1}{2}\|\mathbf{e}(t)\|_{L^2(\Omega_\mathbf{x})}^2+c_0\int_{t_0}^t\|\mathbf{e}(s)\|_{V_\mathbf{x}}^2\,\mathrm{d}s\\
    &\qquad\qquad\leq\frac{\varepsilon}{2}e^{2C_{\ell}(t-t_0)}\int_{t_0}^t\|\mathbf{u}_{tt}^0(s)\|_{L^2(\Omega_\mathbf{x})}^2\,\mathrm{d}s + \varepsilon\|\mathbf{e}_t(t)\|_{L^2(\Omega_\mathbf{x})}\|\mathbf{e}(t)\|_{V_\mathbf{x}}.
\end{align*}
\end{proof}

Therefore, \Cref{thm: convergence} shows that $\mathbf{e}(t)\to\mathbf{0}$ in $L^2(\Omega_\mathbf{x})$ for all $t\in[t_0,T]$, and that $\mathbf{e}\to\mathbf{0}$ in $L^2([t_0,T];V_\mathbf{x})$, since $c_0\to\alpha_0/2$, ensuring that $\mathbf{u}^\varepsilon\to\mathbf{u}^0$ as $\varepsilon\to0$.

\section{Conclusions}
\label{sec: conclusions}
In this work, we developed a unified 4D spatiotemporal formulation for time-dependent convection-diffusion problems using differential forms and exterior calculus. Extending the space-time framework of \cite{bank2017arbitrary}, we generalized the formulation beyond the $H(\textbf{grad})$ setting to encompass the full family of problems posed in $H(\textbf{grad})$, $H(\textbf{curl})$, $H(\textnormal{div})$, and the purely temporal $L^2$ framework. This required the introduction of appropriate spatiotemporal diffusion tensors, convection forms, and unified boundary conditions, together with a small artificial temporal perturbation ensuring the nondegeneracy of the diffusion operator.
Importantly, the resulting 4D governing equation naturally embeds physics-preserving structures, such as divergence-free and curl-free conditions, without imposing them externally.
We also introduced an exponentially-fitted spatiotemporal flux operator that symmetrizes the convection-diffusion flux and mirrors structural features of a Hodge Laplacian. This operator supports a unified variational formulation and facilitates the analysis of well-posedness across all form degrees. Finally, we established that the perturbed spatiotemporal formulation converges to the original convection-diffusion problems as the temporal perturbation parameter tends to zero, thereby ensuring consistency.

Future work will investigate variants of the unified spatiotemporal formulation that incorporate Lie convection operators, develop 4D monotone discretizations for convection-dominated regimes, and extend robust solver strategies, such as those in \cite{wang2025robust}, to the spatiotemporal setting.



\section*{Acknowledgments}
The authors thank SeongHee Jeong and Ludmil Zikatanov for their valuable comments, suggestions, and insights toward the manuscript.
\bibliographystyle{siamplain}
\bibliography{references}
\end{document}